\newtheorem{theorem}{Theorem}[section]
\newtheorem{proposition}[theorem]{Proposition}
\newtheorem{definition}[theorem]{Definition}
\newtheorem{remark}[theorem]{Remark}
\title{Asymptotic analysis a perturbed Robin problem in a planar domain}
\author{Paolo Musolino\thanks{Dipartimento di Scienze Molecolari e Nanosistemi, Universit\`a Ca' Foscari Venezia, via Torino 155, 30172 Venezia Mestre, Italy} ,  Martin Dutko\thanks{Rockfield Software Limited, King's Road, Ethos Building, Swansea, SA1 8PH, Wales UK} ,  Gennady Mishuris\thanks{Department of Mathematics, Aberystwyth University, Ceredigion, Aberystwyth, SY23 3BZ Wales, UK} }
\date{\ }
\begin{document}

\maketitle

\noindent
{\bf Abstract:} We consider a perforated domain $\Omega(\epsilon)$ of $\mathbb{R}^2$ with a small hole of size $\epsilon$ and we study the behavior of the solution of a mixed Neumann-Robin  problem in $\Omega(\epsilon)$ as the size $\epsilon$ of the small hole tends to $0$. In addition to the geometric degeneracy of the problem, the $\epsilon$-dependent Robin condition may degenerate into a Neumann condition for $\epsilon=0$ and the Robin datum may diverge to infinity. Our goal is to analyze the asymptotic behavior of the solutions to the problem as $\epsilon$ tends to $0$ and understand how the boundary condition affects the behavior of the solutions when $\epsilon$ is close to $0$.

\vspace{9pt}

\noindent
{\bf Keywords:}  singularly perturbed boundary value problem,  Laplace equation, nonlinear Robin condition, perforated planar domain, integral equations

\vspace{9pt}

\noindent
{{\bf 2020 Mathematics Subject Classification:} 35J25; 31B10; 35B25; 35C20; 47H30}

\section{Introduction}
\label{introd}
In this paper we continue the analysis of \cite{MuMi22}, where we have studied the asymptotic behavior of the solutions of a boundary value problem for the Laplace equation in a perforated domain in $\mathbb{R}^n$, $n\geq 3$,  with a nonlinear Robin boundary condition degenerating into a Neumann condition on the boundary of the small hole.

The problem considered in  \cite{MuMi22} was degenerating under three aspects: in the limit case the Robin boundary condition may degenerate into a Neumann boundary condition, the Robin datum may tend to infinity, and, finally, the size $\epsilon$ of the small hole where we consider the Robin condition tends to $0$. The analysis of \cite{MuMi22} was confined to the case of dimension $n\geq 3$, since the two-dimensional case requires a different treatment. Indeed the technique of \cite{MuMi22} is based on potential theory, and as it happens often with such method, the case of dimension $n=2$ and the one of dimension $n\geq 3$ need to be treated separately because of the different aspect of the fundamental solution of the Laplacian.

Boundary value problems with degenerating or perturbed boundary conditions have been analyzed by many authors. Here we mention, for example, Wendland, Stephan, and Hsiao \cite{WeStHs79}, Kirsch \cite{Ki85}, Costabel and Dauge \cite{CoDa96},  Ammari and N\'ed\'elec \cite{AmNe99}, Schmidt and Hiptmair \cite{ScHi17}, and \cite{MuMi18, MuMi22}.

 As already mentioned, another feature of the problem considered in the present paper and in \cite{MuMi22} is the fact that the degenerating boundary condition is posed on the boundary of a small hole. Boundary value problems in domain with small holes have been studied by many authors. Asymptotic analysis techniques have been used for example in the works of Ammari  and Kang  \cite{AmKa07},  Il'in \cite{Il92}, Maz'ya, Movchan, and Nieves \cite{MaMoNi13, MaMoNi14, MaMoNi16, MaMoNi17, MaMoNi21}, Maz'ya, Nazarov, and Plamenevskij \cite{MaNaPl00i, MaNaPl00ii}, Nieves \cite{Ni17}, Nieves and Movchan \cite{NiMo22}, Novotny and Soko{\l}{o}wski \cite{NoSo13}. The method of the present paper is instead, as in \cite{MuMi22}, the Functional Analytic Approach proposed by Lanza de Cristoforis in \cite{La02} for the analysis of singular perturbation problem in perforated domains. The purpose of the method is to represent the solution of a pertubed problem in terms of real analytic maps and known functions of the perturbation parameters. In particular, we observe that such method has been successfully used for example in Dalla Riva and Lanza de Cristoforis \cite{DaLa10, DaLa10bis, DaLa10ter, DaLa11} and Lanza de Cristoforis \cite{La07}, for the analysis of nonlinear boundary value problems.

In scientific and engineering practice, Robin boundary condition has an important role in many applications. Perhaps the most common use are the transport PDEs utilised in the systems such as convective-dispersive solute transport (van Genuchten and  Alves \cite{vaAl82}), heat transfer (e.g. temperature dependent boundary conditions in forming of the glass containers ass seen  at  \cite{ElfenGlass}), and convective-diffusive mass transfer of different species.   Here, the ability to define arbitrary size of the internal perturbation with Robin boundary condition is important when assessing processes at different scales --  for example when analysing sand fines migration from or into the well during oil or gas production the size of the perturbation $\delta$ (wellbore diameter) will be finite at the wellbore scale assessment but $\delta\to 0$  for field scale analysis (see e.g. \cite{ElfenWelbore}  for various Oil and Gas applications).
 In \cite{MuMi18, MuMi22} and in the present paper, we have considered a Robin problem as simplified model for the transmission problem for a composite domain with imperfect conditions along the joint boundary. Such nonlinear transmission conditions frequently appear in practical applications for various nonlinear multiphysics problems ({\it e.g.}, \cite{Mi04,MiMiOc08,MiMiOc09}).

We begin by introducing the geometry of our problem. Therefore, we fix a regularity parameter $\alpha\in]0,1[$ and we take two subsets, one representing the the unperturbed domain $\Omega^o$ and another representing the shape of the hole $\omega^i$. The sets   $\Omega^o$ and $\omega^i$   satisfy the following assumption
\[
\begin{split}
&\text{$\omega^i$ and $\Omega^o$ are bounded open connected subsets of $\mathbb{R}^2$ of class $C^{1,\alpha}$}\\
&\text{such that $0 \in  \Omega^o \cap \omega^i$ and that $\mathbb{R}^2\setminus\overline{\omega^i}$ and $\mathbb{R}^2\setminus\overline{\Omega^o}$ are connected}.
\end{split}
\]
We refer to Gilbarg and Trudinger~\cite{GiTr83} for the definition of sets and functions of the Schauder class $C^{k,\alpha}$ ($k \in \mathbb{N}$). We set
\[
\epsilon_0 \equiv \mbox{sup}\{\theta \in \mathopen]0, +\infty\mathclose[: \epsilon \overline{\omega^i} \subseteq \Omega^o, \ \forall \epsilon \in \mathopen]- \theta, \theta[  \}\, .
\]
If $\epsilon \in ]0,\epsilon_0[$, then the set $\epsilon \overline{\omega^i}$ is contained in $\Omega^o$. We think of  $\epsilon \overline{\omega^i}$ as a hole and we remove it from the unperturbed domain. Hence, we introduce the perforated domain $\Omega(\epsilon)$ by setting
\[
\Omega(\epsilon) \equiv \Omega^o \setminus \epsilon \overline{\omega^i} \qquad \forall \epsilon\in\mathopen]0,\epsilon_0[\, .
\]
As the parameter $\epsilon$ tends to $0$, the perforated set $\Omega(\epsilon)$
degenerates to the punctured domain $\Omega^o \setminus \{0\}$.

As we have done in \cite{MuMi22}, for each $\epsilon \in ]0,\epsilon_0[$ we study a nonlinear boundary value problem for the Laplace operator: we consider a Neumann condition on $\partial \Omega^o$ and a nonlinear Robin condition on $\epsilon \partial \omega^i$. In order to define the boundary value problem in the set $\Omega(\epsilon)$, we fix two functions
\[
g^o\in C^{0,\alpha}(\partial \Omega^o)\, , \qquad g^i\in C^{0,\alpha}(\partial \omega^i)\, .
\]
Next we take a family $\{F_\epsilon\}_{\epsilon \in ]0,\epsilon_0[}$ of functions from $\mathbb{R}$ to $\mathbb{R}$, and two functions $\delta(\cdot)$ and $\rho(\cdot)$  from $]0,\epsilon_0[$ to $]0,+\infty[$.

Now for each $\epsilon \in ]0,\epsilon_0[$ we consider the following boundary value problem:
\begin{equation}
\label{bvpdelta}
\left\{
\begin{array}{ll}
\Delta u(x)=0 & \forall x \in \Omega(\epsilon)\,,\\
\frac{\partial}{\partial \nu_{\Omega^o}}u(x)=g^o(x) & \forall x \in \partial \Omega^o\, ,\\
\frac{\partial}{\partial \nu_{\epsilon\omega^i}}u(x)=\delta(\epsilon) F_\epsilon (u(x))+\frac{g^i(x/\epsilon)}{\rho(\epsilon)} & \forall x \in \epsilon \partial \omega^i\, ,
\end{array}
\right.
\end{equation}
where $\nu_{\Omega^o}$ and $\nu_{\epsilon\omega^i}$ denote the outward unit normal to $\partial \Omega^o$ and to $\partial (\epsilon \omega^i)$, respectively. 

As in in \cite{MuMi22}, our aim is to analyze the behavior of the solutions to problem \eqref{bvpdelta} as $\epsilon \to 0$ and to understand how the size of the hole and the functions $\delta$ and $\rho$ that intervene in the nonlinear Robin condition affect the asymptotic behavior of solutions to the problem \eqref{bvpdelta}. We will adapt the techniques of \cite{MuMi22} for the case of dimension $n\geq 3$ to the planar perforated domain of the present paper.

The paper is organized as follows. In Section \ref{model} we analyze a toy problem in an annular domain. In Section \ref{inteqfor} we transform problem \eqref{bvpdelta} into an equivalent system of integral equations. In Section \ref{rep}, we analyze such system and we prove our main results on the asymptotic behavior of a family of solutions and the corresponding energy integrals. Finally, Section \ref{case} contains some remarks on the linear case.

\section{A toy problem}\label{model}

As we have done in \cite{MuMi18, MuMi22}, we consider problem \eqref{bvpdelta} in the annular domain
\[
\Omega(\epsilon )\equiv \mathbb{B}_2(0,1)\setminus \overline{\mathbb{B}_2(0,\epsilon)}\, ,
\]
 where, for $r>0$, the symbol $\mathbb{B}_2(0,r)$ denotes the open ball in $\mathbb{R}^2$ of center $0$ and radius $r$. In other words, we take $\Omega^o\equiv \mathbb{B}_2(0,1)$ and $\omega^i\equiv \mathbb{B}_2(0,1)$.

We set $\epsilon_0=1$, $F_\epsilon(\tau)=\tau$ for all $\tau \in \mathbb{R}$ and for all $\epsilon \in ]0,\epsilon_0[$, $g^o=a$, and $g^i=b$, where $a,b \in \mathbb{R}$. In addition, we take  two functions $\delta, \rho \colon ]0,1[\mapsto ]0,+\infty[$ and for each $\epsilon \in ]0,1[$ we consider the problem

\begin{equation}
\label{bvpmodeldelta}
\left\{
\begin{array}{ll}
\Delta u(x)=0 & \forall x \in \mathbb{B}_2(0,1)\setminus \overline{\mathbb{B}_2(0,\epsilon)}\,,\\
\frac{\partial}{\partial \nu_{\mathbb{B}_2(0,1)}}u(x)=a & \forall x \in \partial \mathbb{B}_2(0,1)\, ,\\
\frac{\partial}{\partial \nu_{\mathbb{B}_2(0,\epsilon)}}u(x)=\delta(\epsilon) u(x)+\frac{b}{\rho(\epsilon)} & \forall x \in \partial \mathbb{B}_2(0,\epsilon)\, .
\end{array}
\right.
\end{equation}

It is well known that for each $\epsilon \in ]0,1[$ problem \eqref{bvpmodeldelta} has a unique solution in $C^{1,\alpha}(\overline{\Omega(\epsilon)})$. We denote such a solution by $u_\epsilon$.

On the other hand,  in the unperturbed domain $\mathbb{B}_2(0,1)$ the Neumann problem
\begin{equation}
\label{bvpmodeldelta0}
\left\{
\begin{array}{ll}
\Delta u(x)=0 & \forall x \in \mathbb{B}_2(0,1)\,,\\
\frac{\partial}{\partial \nu_{\mathbb{B}_2(0,1)}}u(x)=a & \forall x \in \partial \mathbb{B}_2(0,1)\, \\
\end{array}
\right.
\end{equation}
 is subject to compatibility conditions on the Neumann datum on $\partial \mathbb{B}_2(0,1)$. In particular, in this specific case of constant Neumann datum, problem \eqref{bvpmodeldelta0} has a solution if and only if
\begin{equation}\label{eq:compmodel:1}
a =0\, .
\end{equation}
For $a= 0$,  the Neumann problem \eqref{bvpmodeldelta0}  has the  one-dimensional space of constant functions in $\overline{\mathbb{B}_2(0,1)}$ as the space of solutions, whereas if instead we have that $a\neq 0$, then problem \eqref{bvpmodeldelta0} does not have any solution.

As a consequence, if the compatibility condition \eqref{eq:compmodel:1} does not hold, the unique solution $u_\epsilon$ of problem \eqref{bvpmodeldelta} clearly cannot converge to a solution of \eqref{bvpmodeldelta0} as $\epsilon \to 0$ (since problem \eqref{bvpmodeldelta0} has no solutions). Also, as we shall see, the solutions may diverge as $\epsilon \to 0$ even if $a=0$, because of the  terms $\delta(\epsilon)$ and $\rho(\epsilon)$. Here we wish to investigate how the Robin condition on the region $\epsilon \partial \omega^i$ influences  the asymptotic behavior of the solution as $\epsilon\to 0$.

Now, our goal is to explicitly construct  the solution $u_\epsilon$ of our toy problem \eqref{bvpmodeldelta} and then analyze the behavior of $u_\epsilon$ as $\epsilon \to 0$. We search for the solution $u_\epsilon$ in the form
\[
u_\epsilon(x)\equiv A_\epsilon \log |x| + B_\epsilon \qquad \forall x \in \overline{\Omega(\epsilon)}\, ,
\]
and we need to determine the constants $A_\epsilon$ and $B_\epsilon$ so that  the boundary conditions of problem \eqref{bvpmodeldelta} hold.

Since
\[
\nabla u_\epsilon (x)=A_\epsilon \frac{x}{|x|^2}\, ,
\]
to satisfy the Neumann condition on $\partial \mathbb{B}_2(0,1)$, we must have
\[
A_\epsilon=a\, .
\]
On the other hand, to satisfy  the Robin condition on $\partial \mathbb{B}_2(0,\epsilon)$, we need to determine $B_\epsilon$ so that
\begin{equation}
\label{cond_lin}
\frac{x}{|x|}\cdot a \frac{x}{|x|^2}=\delta(\epsilon) (a \log |x| +B_\epsilon)+\frac{b}{\rho(\epsilon)}\qquad \forall x\in\partial  \mathbb{B}_2(0,\epsilon)\, ,
\end{equation}
\textit{i.e.,}
\[
  \frac{a}{\epsilon}=\delta(\epsilon) (a \log \epsilon +B_\epsilon)+\frac{b}{\rho(\epsilon)}\qquad \forall x\in\partial  \mathbb{B}_2(0,\epsilon)\, .
\]
Therefore,
\[
B_\epsilon=\frac{1}{\delta(\epsilon)}\bigg(\frac{a}{\epsilon}-\frac{b}{\rho(\epsilon)}\bigg)-a\log \epsilon\, ,
\]
and, as a consequence, also
\begin{equation}\label{eq:sol:2}
u_\epsilon(x)\equiv a \log |x| +\frac{1}{\delta(\epsilon)}\bigg(\frac{a}{\epsilon}-\frac{b}{\rho(\epsilon)}\bigg)-a\log \epsilon \qquad \forall x \in \overline{\Omega(\epsilon)}\, .
\end{equation}
We can rewrite \eqref{eq:sol:2} as
\[
u_\epsilon(x)\equiv a \log |x| +\frac{1}{\epsilon\delta(\epsilon)}\bigg(a-b\frac{\epsilon}{\rho(\epsilon)}-a  \epsilon \delta(\epsilon)\log \epsilon\bigg) \qquad \forall x \in \overline{\Omega(\epsilon)}\, .
\]
This, for example, implies that if
\[
l_0\equiv \lim_{\epsilon\to 0}\epsilon\delta(\epsilon)\log \epsilon \in \mathbb{R} \, ,\qquad r_0\equiv \lim_{\epsilon\to 0}\frac{\epsilon}{\rho(\epsilon)} \in \mathbb{R}\, ,
\]
and
\[
a-b r_0-a  l_0\neq 0\, ,
\]
then the value of the solution 
$u_\epsilon(\overline{x})$ is asymptotic to $(a-b r_0-a  l_0)/(\epsilon \delta(\epsilon))$ as $\epsilon$ tends to $0$ for all fixed $\overline{x} \in \overline{\Omega}\setminus \{0\}$. 

This means that, under suitable assumptions on the behavior of $\delta(\epsilon)$ and $\rho(\epsilon)$ as $\epsilon \to 0$, the value of the solution $u_\epsilon(\overline{x})$ at a fixed point $\overline{x} \in \overline{\Omega}\setminus \{0\}$ behaves like $(a-b r_0-a  l_0)/(\epsilon  \delta(\epsilon))$. 
 If instead for each $\epsilon$ positive and small enough, we take $\tilde{x}_\epsilon$ such that $|\tilde{x}_\epsilon|=\epsilon$, then
\begin{align*}
u_\epsilon(\tilde{x}_\epsilon) &= a \log \epsilon +\frac{1}{\epsilon\delta(\epsilon)}\bigg(a-b\frac{\epsilon}{\rho(\epsilon)}-a  \epsilon \delta(\epsilon)\log \epsilon\bigg) \\
&= \frac{1}{\epsilon\delta(\epsilon)} \bigg( a \epsilon\delta(\epsilon) \log \epsilon +a-b\frac{\epsilon}{\rho(\epsilon)}-a  \epsilon \delta(\epsilon)\log \epsilon\bigg) \\
&= \frac{1}{\epsilon\delta(\epsilon)} \bigg( a-b\frac{\epsilon}{\rho(\epsilon)}\bigg) \, .
\end{align*}
In particular, if
\[
a-br_0\neq 0\, ,
\]
then the value $u_\epsilon(\tilde{x}_\epsilon)$ of the solution at $\tilde{x}_\epsilon$ is asymptotic to $( a-b r_0)/(\epsilon \delta(\epsilon))$ as $\epsilon \to 0$.

We now consider the energy integral of $u_\epsilon$.  A direct computation shows that
\begin{align*}
\int_{\Omega(\epsilon)}|\nabla u_\epsilon(x)|^2\, dx&=\int_{\Omega(\epsilon)}|\nabla \Big(a \log|x|\Big)|^2\, dx=\int_{\Omega(\epsilon)} a^2\frac{1}{|x|^2} \, dx\\
&\qquad =a^2 2 \pi \int_\epsilon^1 \frac{1}{r} \, dr=a^2 2 \pi\Big(-\log \epsilon\Big)\, .
\end{align*}

We note that equation \eqref{eq:sol:2} provides a solution of the linear toy problem \eqref{bvpmodeldelta} also if $\delta(\epsilon)<0$. In case $\delta(\epsilon)<0$, uniqueness for the solution  of  problem \eqref{bvpmodeldelta} may fail since indeed $\sigma=-\delta(\epsilon)$ could be a mixed Steklov-Neumann eigenvalue of problem
\[
\begin{cases}
\Delta u=0 & {\rm in\ }\Omega(\epsilon)\, ,\\
\frac{\partial}{\partial \nu_{\Omega(\epsilon)}}u=0 &{\rm on\ }\partial \mathbb{B}_2(0,1)\, ,\\
\frac{\partial}{\partial \nu_{\Omega(\epsilon)}}u=\sigma u & {\rm on\ }\partial \mathbb{B}_2(0,\epsilon)\, .
\end{cases}
\]
A detailed discussion on how to extends the results also to the case  $\delta(\epsilon)<0$ and the analysis of the behavior of Steklov-Neumann eigenvalues will be the subject of future investigations.

It is also interesting to look at a nonlinear toy problem with arbitrary functions $F_\epsilon(\cdot)$. Then repeating the same line of reasoning, the only difference appears in the equation \eqref{cond_lin}, that will take the form:
\begin{equation}
\label{cond_nonlin}
\frac{x}{|x|}\cdot a \frac{x}{|x|^2}=\delta(\epsilon) F_\epsilon(a \log |x| +B_\epsilon)+\frac{b}{\rho(\epsilon)}\qquad \forall x\in\partial  \mathbb{B}_2(0,\epsilon)\,.
\end{equation}
If we additionally assume that the functions $F_\epsilon: \mathbb{R} \to \mathbb{R}$  are invertible then by \eqref{cond_nonlin} for each $\epsilon$ the constant $B_\epsilon$ can be uniquely found:
 \[
  B_\epsilon=F_\epsilon^{-1}\left(\frac{a\rho(\epsilon)-\epsilon b}{\epsilon\rho(\epsilon)\delta(\epsilon)}\right)- a \log \epsilon\,  ,
\] 
and the analysis can be performed in a similar way as we have previously done. However, if the functions $F_\epsilon$ are not bijections, then the analysis of the existence (and possibly uniqueness) of the solution becomes more complex. For example, a solutions can be derived under specific conditions on the parameters if suitable rescaling of the functions $F_\epsilon$ are locally invertible. This shows how rich the problem is even in the simple situation of a circular annular domain. On the other hand, many of the features mentioned here are preserved for the general 2D case. Below, we provide an accurate analysis of the general problem formulated above, making, where appropriate, a reference to the similar feature highlighted here for the toy problem.

\section{An integral equation formulation of the boundary value problem}\label{inteqfor}

As in  \cite{MuMi18, MuMi22},  we use the Functional Analytic Approach introduced by Lanza de Cristoforis \cite{La02} to analyze problem \eqref{bvpdelta} when the parameter $\epsilon$ is close to $0$. We refer to  \cite{DaLaMu21} for a detailed presentation of the method. In order to apply such approach, we need to define classical objects of potential theory. We first denote by $S_2$ the fundamental solution of the Laplace operator, {\it i.e.} the function from $\mathbb{R}^2\setminus\{0\}$ to ${\mathbb{R}}$ defined by
\[
S_2(x)\equiv
\frac{1}{2\pi}\log |x| \qquad    \forall x\in
{\mathbb{R}}^{2}\setminus\{0\}\, .
\]
 By means of  $S_2$, we construct the single layer potentials, that we use to represent  the solutions of problem \eqref{bvpdelta}. So let $\Omega$ be a bounded open connected subset of $\mathbb{R}^2$ of class $C^{1,\alpha}$. We  introduce the single layer potential by 
\[
v[\partial\Omega,\mu](x)\equiv
\int_{\partial\Omega}S_2(x-y)\mu(y)\,d\sigma_{y}
\qquad\forall x\in \mathbb{R}^2\,,
\]
 for all $\mu\in C^{0}(\partial\Omega)$. If $\mu\in C^{0}(\partial{\Omega})$, then $v[\partial\Omega,\mu]$ is continuous in  $\mathbb{R}^2$. Moreover, if $\mu\in C^{0,\alpha}(\partial\Omega)$, then the function
$v^{+}[\partial\Omega,\mu]\equiv v[\partial\Omega,\mu]_{|\overline{\Omega}}$ belongs to $C^{1,\alpha}(\overline{\Omega})$, and the function
$v^{-}[\partial\Omega,\mu]\equiv v[\partial\Omega,\mu]_{|\mathbb{R}^2 \setminus \Omega}$ belongs to $C^{1,\alpha}_{\mathrm{loc}}
(\mathbb{R}^2 \setminus \Omega)$.  The normal derivative of the single layer potential on  $\partial \Omega$, instead, presents a jump. To describe such jump,  we set
\[
W^{\ast}[\partial\Omega,\mu](x)\equiv
\int_{\partial\Omega}\nu_{\Omega}(x) \cdot \nabla S_2(x-y)\mu(y)\,d\sigma_{y}
\qquad\forall x\in \partial \Omega\,,
\]
where $\nu_{\Omega}$  denotes the outward unit normal to $\partial \Omega$. If $\mu\in C^{0,\alpha}(\partial{\Omega})$, the function
$W^{\ast}[\partial\Omega,\mu]$ belongs to $C^{0,\alpha}(\partial \Omega)$ and we have
\[
\frac{\partial }{\partial \nu_{\Omega}}v^\pm[\partial \Omega,\mu]=\mp \frac{1}{2}\mu + W^{\ast}[\partial \Omega,\mu]\qquad \text{on $\partial \Omega$\, .}
\]

We will use density functions with zero integral mean and thus we find it convenient to set
\[
C^{0,\alpha}(\partial \Omega^o)_{0}\equiv
\bigg\{
f\in C^{0,\alpha}(\partial \Omega^o):\,\int_{\partial\Omega^o}f\,d\sigma=0
\bigg\}\,.
\]

By arguing as in \cite[\S 3]{MuMi22}, we are ready to establish in Proposition \ref{prop:corr} a correspondence between the solutions of problem \eqref{bvpdelta} and those of a (nonlinear) system of integral equations.

\begin{proposition}\label{prop:corr}
Let $\epsilon \in ]0,\epsilon_0[$. Then the map from the set of triples $(\mu^o,\mu^i,\xi)\in  C^{0,\alpha}(\partial\Omega^o)_{0}\times C^{0,\alpha}(\partial\omega^i)\times {\mathbb{R}}$ such that
\begin{align}
&-\frac{1}{2}\mu^o(x)+\int_{\partial \Omega^o}\nu_{\Omega^o}(x)\cdot \nabla S_2(x-y)\mu^o(y)\, d\sigma_y\nonumber\\
&\qquad+\int_{\partial \omega^i}\nu_{\Omega^o}(x)\cdot \nabla S_2(x-\epsilon s)\mu^i(s)\, d\sigma_s=g^o(x) \qquad \forall x \in \partial \Omega^o\, ,\label{eq:corr:1a}\\
&\frac{1}{2}\mu^i(t)+\epsilon\int_{\partial \Omega^o}\nu_{\omega^i}(t)\cdot \nabla S_2(\epsilon t-y)\mu^o(y)\, d\sigma_y+\int_{\partial \omega^i}\nu_{\omega^i}(t)\cdot \nabla S_2(t-s)\mu^i(s)\, d\sigma_s \nonumber\\
&\qquad=\epsilon \delta(\epsilon) F_\epsilon \Bigg(\int_{\partial \Omega^o}S_2(\epsilon t-y)\mu^o(y)\, d\sigma_y+ \int_{\partial \omega^i}S_2(t-s)\mu^i(s)\, d\sigma_s\nonumber\\&\quad\qquad+\frac{\log \epsilon}{2\pi}\int_{\partial \omega^i}\mu^i\, d\sigma+\frac{\xi}{\delta(\epsilon)\epsilon}\Bigg)+g^i(t)\frac{\epsilon}{\rho(\epsilon)} \qquad \forall t \in \partial \omega^i\, ,\label{eq:corr:1b}
\end{align}
to the set of those functions $u\in C^{1,\alpha}(\overline{\Omega(\epsilon)})$ which solve problem \eqref{bvpdelta},  which takes a triple $(\mu^o,\mu^i,\xi)$ to the function
\[
\int_{\partial \Omega^o}S_2(x-y)\mu^o(y)\, d\sigma_y+ \int_{\partial \omega^i}S_2(x-\epsilon s)\mu^i(s)\, d\sigma_s+\frac{\xi}{\delta(\epsilon)\epsilon}\qquad \forall x \in \overline{\Omega(\epsilon)}
\]
is a bijection.
\end{proposition}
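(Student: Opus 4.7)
The plan is to take the natural layer-potential ansatz
\[
u(x) \;=\; \int_{\partial\Omega^o}S_2(x-y)\mu^o(y)\,d\sigma_y+\int_{\partial\omega^i}S_2(x-\epsilon s)\mu^i(s)\,d\sigma_s+\frac{\xi}{\delta(\epsilon)\epsilon}\qquad\forall x\in\overline{\Omega(\epsilon)},
\]
observe that any function of this form is automatically harmonic in $\Omega(\epsilon)$ (as a sum of single-layer potentials supported on $\partial\Omega^o$ and on $\partial(\epsilon\omega^i)$, plus a constant), and then show that the Neumann condition on $\partial\Omega^o$ together with the Robin condition on $\epsilon\partial\omega^i$ are equivalent to the pair of equations \eqref{eq:corr:1a}--\eqref{eq:corr:1b}.

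The first condition is the simpler one: the jump formula $\partial_{\nu_{\Omega^o}}v^{+}[\partial\Omega^o,\mu^o]=-\tfrac{1}{2}\mu^o+W^{\ast}[\partial\Omega^o,\mu^o]$ together with the fact that the inner layer is smooth near $\partial\Omega^o$ gives \eqref{eq:corr:1a} immediately. For the Robin condition the plan is to set $x=\epsilon t$, use $\nu_{\epsilon\omega^i}(\epsilon t)=\nu_{\omega^i}(t)$, and change variables $r=\epsilon s$ in the layer on $\partial(\epsilon\omega^i)$. The exterior jump formula then produces the term $+\tfrac{1}{2}\mu^i(t)$, and exploiting the $2$-dimensional scaling $\nabla S_2(\epsilon z)=\epsilon^{-1}\nabla S_2(z)$ one recovers, after multiplying through by $\epsilon$, exactly the left-hand side of \eqref{eq:corr:1b}. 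To handle the argument of $F_\epsilon$, one uses the companion identity
\[
S_2(\epsilon z)=S_2(z)+\frac{\log\epsilon}{2\pi},
\]
which produces the characteristic term $\tfrac{\log\epsilon}{2\pi}\int_{\partial\omega^i}\mu^i\,d\sigma$; the constant part of the ansatz is then grouped with it in the form $\xi/(\delta(\epsilon)\epsilon)$, while the Robin datum rescales as $g^i(t)\epsilon/\rho(\epsilon)$ after multiplication by $\epsilon$. Since every step is reversible, a triple solves \eqref{eq:corr:1a}--\eqref{eq:corr:1b} if and only if the associated function solves \eqref{bvpdelta}, which proves that the map is well-defined onto the solution set.

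For bijectivity the strategy is the standard one from the Functional Analytic Approach. Injectivity follows because, if two triples yielded the same $u$, the difference would give a layer representation of the zero function on $\overline{\Omega(\epsilon)}$. Harmonic continuation inside $\epsilon\omega^i$ (via the inner single-layer) and outside $\Omega^o$ (via the outer one), combined with uniqueness for the associated Dirichlet problems and the normalization $\mu^o\in C^{0,\alpha}(\partial\Omega^o)_0$, forces both densities and the constant to vanish; the zero-mean condition is crucial here to handle the one-dimensional kernel inherent to the $2$-dimensional single-layer operator on $\partial\Omega^o$. Surjectivity goes the other way: given a solution $u$ of \eqref{bvpdelta}, one represents it in the above layer form by solving an auxiliary interior problem in $\epsilon\omega^i$ and an auxiliary exterior problem for $\Omega^o$, extracting the densities $\mu^o,\mu^i$ from the jumps of the normal derivatives and absorbing the remaining additive constant into $\xi/(\delta(\epsilon)\epsilon)$.

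The main obstacle, and the whole reason the $2$-dimensional case is treated separately from \cite{MuMi22}, is the $\log$-behaviour of $S_2$: because $S_2(\epsilon z)$ does not scale as a power of $\epsilon$, the pullback from $\partial(\epsilon\omega^i)$ to $\partial\omega^i$ generates the extra term $(\log\epsilon)/(2\pi)\int_{\partial\omega^i}\mu^i\,d\sigma$ inside $F_\epsilon$, and the free constant $c$ in the representation must be carefully split as $\xi/(\delta(\epsilon)\epsilon)$ in order to both keep $\mu^o$ in the zero-mean space and make the $\epsilon\to 0$ asymptotics tractable later on. Once this bookkeeping is done, the argument reproduces the structure of \cite[\S 3]{MuMi22}.
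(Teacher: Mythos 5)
Your proposal is correct and follows essentially the same route as the paper, which itself establishes Proposition~\ref{prop:corr} by the argument of \cite[\S 3]{MuMi22} adapted to the plane: the single-layer ansatz with the constant $\xi/(\delta(\epsilon)\epsilon)$, the interior/exterior jump formulas combined with the scalings $\nabla S_2(\epsilon z)=\epsilon^{-1}\nabla S_2(z)$ and $S_2(\epsilon z)=S_2(z)+\frac{\log\epsilon}{2\pi}$ (the source of the $\frac{\log\epsilon}{2\pi}\int_{\partial\omega^i}\mu^i\,d\sigma$ term), and bijectivity via uniqueness of the representation with the zero-mean normalization on $\mu^o$. No gaps worth flagging.
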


By Proposition \ref{prop:corr} we can study the behavior of the solutions of boundary value problem \eqref{bvpdelta} by analyzing those of the system of integral equations \eqref{eq:corr:1a}-\eqref{eq:corr:1b} as $\epsilon \to 0$. As we have done in \cite{MuMi22}, we make some structural assumptions on the nonlinearity and we assume that
\begin{equation}\label{eq:addass:1}
\begin{split}
&\text{there exist $\epsilon_1 \in ]0,\epsilon_0[$, $m \in \mathbb{N}$, a real analytic function $\tilde{F}$ from $\mathbb{R}^{m+1}$ to $\mathbb{R}$,}\\
&\text{a function $\eta(\cdot)$ from $]0,\epsilon_1[$ to $\mathbb{R}^{m}$ such that $\eta_0\equiv \lim_{\epsilon\to 0}\eta(\epsilon) \in \mathbb{R}^m$ and that}\\
&\text{$\epsilon \delta(\epsilon) F_\epsilon \Big(\frac{1}{\epsilon \delta(\epsilon)}\tau\Big)=\tilde{F}(\tau,\eta(\epsilon))$ for all $(\tau,\epsilon) \in\mathbb{R}\times ]0,\epsilon_1[$.}
\end{split}
\end{equation}

\section{Analytic representation formulas for the solution of the boundary value problem}\label{rep}

Under the additional assumption \eqref{eq:addass:1}, we can rewrite the set of equations \eqref{eq:corr:1a}-\eqref{eq:corr:1b}  as
\begin{align}
&-\frac{1}{2}\mu^o(x)+\int_{\partial \Omega^o}\nu_{\Omega^o}(x)\cdot \nabla S_2(x-y)\mu^o(y)\, d\sigma_y\nonumber\\
&\qquad+\int_{\partial \omega^i}\nu_{\Omega^o}(x)\cdot \nabla S_2(x-\epsilon s)\mu^i(s)\, d\sigma_s=g^o(x) \qquad \forall x \in \partial \Omega^o\, ,\label{eq:corr:1a2}\\
&\frac{1}{2}\mu^i(t)+\epsilon\int_{\partial \Omega^o}\nu_{\omega^i}(t)\cdot \nabla S_2(\epsilon t-y)\mu^o(y)\, d\sigma_y+\int_{\partial \omega^i}\nu_{\omega^i}(t)\cdot \nabla S_2(t-s)\mu^i(s)\, d\sigma_s \nonumber\\
&\qquad=\tilde{F} \Bigg(\epsilon \delta(\epsilon)\int_{\partial \Omega^o}S_2(\epsilon t-y)\mu^o(y)\, d\sigma_y+\epsilon \delta(\epsilon) \int_{\partial \omega^i}S_2(t-s)\mu^i(s)\, d\sigma_s\nonumber\\
&\quad\qquad+\frac{\epsilon\delta(\epsilon)\log \epsilon}{2\pi} \int_{\partial \omega^i}\mu^i\, d\sigma+\xi,\eta(\epsilon)\Bigg)+g^i(t)\frac{\epsilon}{\rho(\epsilon)} \qquad \forall t \in \partial \omega^i\, ,\label{eq:corr:1b2}
\end{align}
for all $\epsilon \in ]0,\epsilon_1[$. In order to pass to the limit as $\epsilon \to 0$ in equations \eqref{eq:corr:1a2}-\eqref{eq:corr:1b2}, we need to know the asymptotic behavior for $\epsilon$ close to $0$ of the quantities $\epsilon\delta(\epsilon)$, $\epsilon\delta(\epsilon)\log \epsilon$, and $\frac{\epsilon}{\rho(\epsilon)}$ which appear in \eqref{eq:corr:1b2}. As a consequence, we now assume that
\begin{equation}\label{eq:Lmbd:limass}
l_0\equiv \lim_{\epsilon\to 0}\epsilon\delta(\epsilon)\log \epsilon \in \mathbb{R} \, ,\qquad r_0\equiv \lim_{\epsilon\to 0}\frac{\epsilon}{\rho(\epsilon)} \in \mathbb{R}\, .
\end{equation}
Condition \eqref{eq:Lmbd:limass} implies also
\[
\lim_{\epsilon\to 0}\epsilon\delta(\epsilon)=0\, .
\]
In \eqref{eq:corr:1a2}-\eqref{eq:corr:1b2}, we replace the quantities
\[
\epsilon \delta(\epsilon)\, ,\qquad \epsilon\delta(\epsilon)\log \epsilon\, , \qquad \eta(\epsilon)\, ,\qquad \frac{\epsilon}{\rho(\epsilon)}\, ,
\]
by the auxiliary variables
\[
\gamma_1,\qquad\gamma_2,\qquad\gamma_3,\qquad\gamma_4,
\]
respectively, and we  introduce the operator $\Lambda\equiv (\Lambda^o, \Lambda^i)$ from $]-\epsilon_1,\epsilon_1[\times \mathbb{R}^{m+3}\times C^{0,\alpha}(\partial\Omega^o)_0\times C^{0,\alpha}(\partial\omega^i)\times \mathbb{R}$ to $C^{0,\alpha}(\partial\Omega^o)\times C^{0,\alpha}(\partial\omega^i)$  by setting
\begin{align}
\Lambda^o&[\epsilon,\gamma_1,\gamma_2,\gamma_3,\gamma_4,\mu^o,\mu^i,\xi](x)\equiv-\frac{1}{2}\mu^o(x)+\int_{\partial \Omega^o}\nu_{\Omega^o}(x)\cdot \nabla S_2(x-y)\mu^o(y)\, d\sigma_y\nonumber\\
&\qquad+\int_{\partial \omega^i}\nu_{\Omega^o}(x)\cdot \nabla S_2(x-\epsilon s)\mu^i(s)\, d\sigma_s-g^o(x) \qquad \forall x \in \partial \Omega^o\, ,\label{eq:Lmbd:1a2}
\end{align}
\begin{align}
\Lambda^i&[\epsilon,\gamma_1,\gamma_2,\gamma_3,\gamma_4,\mu^o,\mu^i,\xi](t)\equiv\frac{1}{2}\mu^i(t)+\epsilon\int_{\partial \Omega^o}\nu_{\omega^i}(t)\cdot \nabla S_2(\epsilon t-y)\mu^o(y)\, d\sigma_y\nonumber\\
&\qquad+\int_{\partial \omega^i}\nu_{\omega^i}(t)\cdot \nabla S_2(t-s)\mu^i(s)\, d\sigma_s \nonumber\\
&\qquad-\tilde{F} \Bigg(\gamma_1\int_{\partial \Omega^o}S_2(\epsilon t-y)\mu^o(y)\, d\sigma_y+\gamma_1 \int_{\partial \omega^i}S_2(t-s)\mu^i(s)\, d\sigma_s\nonumber\\
&\qquad+\frac{\gamma_2}{2\pi} \int_{\partial \omega^i}\mu^i\, d\sigma+\xi,\gamma_3\Bigg)-g^i(t)\gamma_4 \qquad \forall t \in \partial \omega^i\, ,\label{eq:Lmbd:1b2}
\end{align}
for all $(\epsilon,\gamma_1,\gamma_2,\gamma_3,\gamma_4,\mu^o,\mu^i,\xi)\in ]-\epsilon_1,\epsilon_1[\times \mathbb{R}^{m+3}\times C^{0,\alpha}(\partial\Omega^o)_0\times C^{0,\alpha}(\partial\omega^i)\times \mathbb{R}$.  By definitions \eqref{eq:Lmbd:1a2}-\eqref{eq:Lmbd:1b2},  for $\epsilon \in ]0,\epsilon_1[$ the system of equations
\begin{align}
&\Lambda^o[\epsilon,\epsilon\delta(\epsilon),\epsilon\delta(\epsilon)\log \epsilon,\eta(\epsilon),\frac{\epsilon}{\rho(\epsilon)},\mu^o,\mu^i,\xi](x)=0\qquad \forall x \in \partial \Omega^o\, ,\label{eq:equiv:1a2}\\
&\Lambda^i[\epsilon,\epsilon\delta(\epsilon),\epsilon\delta(\epsilon)\log \epsilon,\eta(\epsilon),\frac{\epsilon}{\rho(\epsilon)},\mu^o,\mu^i,\xi](t)=0\qquad \forall t \in \partial \omega^i\, ,\label{eq:equiv:1b2}
\end{align}
is equivalent to the system of integral equations \eqref{eq:corr:1a2}-\eqref{eq:corr:1b2}. Letting $\epsilon \to 0$ in \eqref{eq:equiv:1a2}-\eqref{eq:equiv:1b2}, we obtain the equations
\begin{align}
&-\frac{1}{2}\mu^o(x)+\int_{\partial \Omega^o}\nu_{\Omega^o}(x)\cdot \nabla S_2(x-y)\mu^o(y)\, d\sigma_y\nonumber\\
&\qquad+\nu_{\Omega^o}(x)\cdot \nabla S_2(x)\int_{\partial \omega^i}\mu^i(s)\, d\sigma_s=g^o(x) \qquad \forall x \in \partial \Omega^o\, ,\label{eq:limsys:1a2}\\
&\frac{1}{2}\mu^i(t)+\int_{\partial \omega^i}\nu_{\omega^i}(t)\cdot \nabla S_2(t-s)\mu^i(s)\, d\sigma_s \nonumber\\
&\qquad=\tilde{F} \Bigg(\frac{l_0}{2\pi} \int_{\partial \omega^i}\mu^i\, d\sigma+\xi,\eta_0\Bigg)+g^i(t)r_0 \qquad \forall t \in \partial \omega^i\, .\label{eq:limsys:1b2}
\end{align}

For $\epsilon \in ]0,\epsilon_1[$ small enough, we would like to prove the existence of solutions $(\mu^o,\mu^i,\xi)$ to \eqref{eq:equiv:1a2}-\eqref{eq:equiv:1b2} around a solution of the limiting system  \eqref{eq:limsys:1a2}-\eqref{eq:limsys:1b2}. Therefore, we further assume that
\begin{equation}\label{eq:exsol:2}
\begin{split}
&\text{the system \eqref{eq:limsys:1a2}-\eqref{eq:limsys:1b2} in the unknown $(\mu^o,\mu^i,\xi)$ admits}\\
&\text{ a solution $(\tilde{\mu}^o,\tilde{\mu}^i,\tilde{\xi})$ in $C^{0,\alpha}(\partial \Omega^o)_0\times C^{0,\alpha}(\partial \omega^i) \times \mathbb{R}$.}
\end{split}
\end{equation}
We now note that if $(\tilde{\mu}^o,\tilde{\mu}^i,\tilde{\xi})$ is a solution of the system \eqref{eq:limsys:1a2}-\eqref{eq:limsys:1b2}, by integrating \eqref{eq:limsys:1a2} on $\partial \Omega^o$ and by the equalities
\[
\int_{\partial \Omega^o}\int_{\partial \Omega^o}\nu_{\Omega^o}(x)\cdot \nabla S_2(x-y)\tilde{\mu}^o(y)\, d\sigma_y\, d\sigma_x =\frac{1}{2}\int_{\partial \Omega^o}\tilde{\mu}^o(y)\, d\sigma_y\,
\]
(cf.~\cite[Lemma~6.11]{DaLaMu21}) and
\[
\int_{\partial \Omega^o}\nu_{\Omega^o}(x)\cdot \nabla S_2(x)\, d\sigma_x=1\,
\]
(cf.~\cite[Corollary~4.6]{DaLaMu21}), we must have
\[
\int_{\partial \omega^i}\tilde{\mu}^i(s)\, d\sigma_s=\int_{\partial \Omega^o} g^o(x)\, d\sigma_x\, .
\]
This implies that the triple $(\tilde{\mu}^o,\tilde{\mu}^i,\tilde{\xi})$ in $C^{0,\alpha}(\partial \Omega^o)_0\times C^{0,\alpha}(\partial \omega^i) \times \mathbb{R}$ is a solution of the system \eqref{eq:limsys:1a2bis}--\eqref{eq:limsys:1c2bis} below
\begin{align}
&-\frac{1}{2}\mu^o(x)+\int_{\partial \Omega^o}\nu_{\Omega^o}(x)\cdot \nabla S_2(x-y)\mu^o(y)\, d\sigma_y\nonumber\\
&\qquad=g^o(x)-\nu_{\Omega^o}(x)\cdot \nabla S_2(x)\int_{\partial \Omega^o} g^o(y)\, d\sigma_y \qquad \forall x \in \partial \Omega^o\, ,\label{eq:limsys:1a2bis}\\
&\frac{1}{2}\mu^i(t)+\int_{\partial \omega^i}\nu_{\omega^i}(t)\cdot \nabla S_2(t-s)\mu^i(s)\, d\sigma_s \nonumber\\
&\qquad=\tilde{F} \Bigg(\frac{l_0}{2\pi} \int_{\partial \Omega^o}g^o\, d\sigma+\xi,\eta_0\Bigg)+g^i(t)r_0 \qquad \forall t \in \partial \omega^i\, ,\label{eq:limsys:1b2bis}\\
&\int_{\partial \omega^i}\mu^i(s)\, d\sigma_s=\int_{\partial \Omega^o} g^o(x)\, d\sigma_x\, .\label{eq:limsys:1c2bis}
\end{align}

By \cite[Theorem 6.25]{DaLaMu21}, we deduce that there exists a unique solution $\tilde{\mu}^o$ in $C^{0,\alpha}(\partial \Omega^o)_0$ of \eqref{eq:limsys:1a2bis}. In other words, if there exists a solution $(\tilde{\mu}^o,\tilde{\mu}^i,\tilde{\xi})$  of the system \eqref{eq:limsys:1a2}-\eqref{eq:limsys:1b2}, then $\tilde{\mu}^o$ is determined as the unique solution in $C^{0,\alpha}(\partial \Omega^o)_0$ of \eqref{eq:limsys:1a2bis}.

In order to have a pair $(\tilde{\mu}^i,\tilde{\xi})$ in $C^{0,\alpha}(\partial \omega^i) \times \mathbb{R}$ solving \eqref{eq:limsys:1b2bis}-\eqref{eq:limsys:1c2bis}, we observe that if there exists $\tilde{\xi} \in \mathbb{R}$ such that
\begin{equation}\label{eq:limsys:1d2bis}
\int_{\partial \Omega^o} g^o(x)\, d\sigma_x=|\partial\omega^i|_1\tilde{F} \Bigg(\frac{l_0}{2\pi} \int_{\partial \Omega^o}g^o\, d\sigma+\tilde{\xi},\eta_0\Bigg)+\int_{\partial \omega^i}g^i(t)\, d\sigma_t r_0
\end{equation}
then \cite[Corollary 6.15]{DaLaMu21} implies the existence of a unique solution $\tilde{\mu}^i$ in $C^{0,\alpha}(\partial \omega^i)$ of 
\begin{align*}
&\frac{1}{2}\mu^i(t)+\int_{\partial \omega^i}\nu_{\omega^i}(t)\cdot \nabla S_2(t-s)\mu^i(s)\, d\sigma_s \\
&\qquad=\tilde{F} \Bigg(\frac{l_0}{2\pi} \int_{\partial \Omega^o}g^o\, d\sigma+\tilde{\xi},\eta_0\Bigg)+g^i(t)r_0 \qquad \forall t \in \partial \omega^i\, ,
\end{align*}
and such solution satisfies also
\[
\int_{\partial \omega^i}\tilde{\mu}^i(s)\, d\sigma_s=\int_{\partial \Omega^o} g^o(x)\, d\sigma_x\, .
\]
In other words this means that if there exists $\tilde{\xi} \in \mathbb{R}$ such that equation \eqref{eq:limsys:1d2bis} holds, then there exists a unique pair $(\tilde{\mu}^o,\tilde{\mu}^i)$ in $C^{0,\alpha}(\partial \Omega^o)_0\times C^{0,\alpha}(\partial \omega^i)$ such that the triple $(\tilde{\mu}^o,\tilde{\mu}^i,\tilde{\xi})$ in $C^{0,\alpha}(\partial \Omega^o)_0\times C^{0,\alpha}(\partial \omega^i) \times \mathbb{R}$ solves system \eqref{eq:limsys:1a2}-\eqref{eq:limsys:1b2}.

Furthermore, we note that equality \eqref{eq:limsys:1d2bis} can be rewritten as 
\begin{equation}\label{eq:limsys:1e2bis}
\tilde{F} \Bigg(\frac{l_0}{2\pi} \int_{\partial \Omega^o}g^o\, d\sigma+\tilde{\xi},\eta_0\Bigg)=\frac{1}{|\partial\omega^i|_1}\bigg(r_0\int_{\partial \omega^i}g^i\, d\sigma - \int_{\partial \Omega^o} g^o\, d\sigma\bigg)\, .
\end{equation}
Thus, in particular, if $\tilde{F} (\cdot,\eta_0)$ is not globally invertible, there can be multiple $\tilde{\xi}\in \mathbb{R}$ such that \eqref{eq:limsys:1e2bis} holds.

In the following proposition, we study the solvability of the system of integral equations \eqref{eq:corr:1a2}-\eqref{eq:corr:1b2}, by applying the Implicit Function Theorem to $\Lambda$, under suitable assumptions on  the partial derivative $\partial_\tau \tilde{F} \Bigg(\frac{l_0}{2\pi} \int_{\partial \Omega^o} g^o\, d\sigma+\tilde{\xi},\eta_0\Bigg)$. The symbol $\partial_\tau \tilde{F}(\tau,\eta)$ denotes the partial derivative of $\tilde{F}$ with respect to the first variable.

\begin{proposition}\label{prop:Lmbd2}
Let assumptions \eqref{eq:addass:1} and  \eqref{eq:Lmbd:limass} hold. Let $(\tilde{\mu}^o,\tilde{\mu}^i,\tilde{\xi})$ be as in assumption  \eqref{eq:exsol:2}. Assume that
\[
\partial_\tau \tilde{F} \Bigg(\frac{l_0}{2\pi} \int_{\partial \Omega^o} g^o\, d\sigma+\tilde{\xi},\eta_0\Bigg) \neq 0\, .
\]
Then there exist $\epsilon_2 \in ]0,\epsilon_1[$, an open neighborhood $\mathcal{U}$ of $(0,l_0,\eta_0,r_0)$ in $\mathbb{R}^{m+3}$, an open neighborhood $\mathcal{V}$ of $(\tilde{\mu}^o,\tilde{\mu}^i,\tilde{\xi})$ in $C^{0,\alpha}(\partial \Omega^o)_0\times C^{0,\alpha}(\partial \omega^i) \times \mathbb{R}$, and a real analytic map $(M^o,M^i, \Xi)$ from $]-\epsilon_2,\epsilon_2[\times \mathcal{U}$ to $\mathcal{V}$ such that
\[
\Bigg(\epsilon\delta(\epsilon),\epsilon\delta(\epsilon)\log \epsilon,\eta(\epsilon),\frac{\epsilon}{\rho(\epsilon)}\Bigg) \in \mathcal{U}\qquad \forall \epsilon \in ]0,\epsilon_2[\, ,
\]
and such that the set of zeros of $\Lambda$ in
$]-\epsilon_2,\epsilon_2[\times \mathcal{U}\times \mathcal{V}$ coincides with the graph of $(M^o,M^i,\Xi)$. In particular,
\[
\Bigg(M^o[0,0,l_0,\eta_0,r_0],M^i[0,0,l_0,\eta_0,r_0],\Xi[0,0,l_0,\eta_0,r_0]\Bigg)=(\tilde{\mu}^o,\tilde{\mu}^i,\tilde{\xi})\,.
\]
\end{proposition}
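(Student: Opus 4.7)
The plan is to apply the (analytic) Implicit Function Theorem to $\Lambda$ at the point $P^{\ast}\equiv (0,0,l_0,\eta_0,r_0,\tilde{\mu}^o,\tilde{\mu}^i,\tilde{\xi})$. There are four things to verify: (i) $\Lambda$ is real analytic on its domain; (ii) $\Lambda[P^{\ast}]=0$; (iii) the partial Fr\'echet differential $d_{(\mu^o,\mu^i,\xi)}\Lambda[P^{\ast}]$ is a linear homeomorphism from $C^{0,\alpha}(\partial\Omega^o)_0\times C^{0,\alpha}(\partial\omega^i)\times\mathbb{R}$ onto $C^{0,\alpha}(\partial\Omega^o)\times C^{0,\alpha}(\partial\omega^i)$; (iv) the tuple $(\epsilon\delta(\epsilon),\epsilon\delta(\epsilon)\log\epsilon,\eta(\epsilon),\epsilon/\rho(\epsilon))$ lies in the neighborhood $\mathcal{U}$ produced by the theorem when $\epsilon$ is small enough.

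For (i), the maps $(\epsilon,\mu^i)\mapsto \int_{\partial\omega^i}\nu_{\Omega^o}(\cdot)\cdot\nabla S_2(\cdot-\epsilon s)\mu^i(s)\,d\sigma_s$ and $(\epsilon,\mu^o)\mapsto\int_{\partial\Omega^o}S_2(\epsilon\,\cdot-y)\mu^o(y)\,d\sigma_y$, together with their analogues, are real analytic in the relevant Schauder spaces by standard results of the Functional Analytic Approach (see \cite{DaLaMu21}). Composition with the real analytic nonlinearity $\tilde{F}$ from assumption \eqref{eq:addass:1} preserves analyticity, so $\Lambda$ is real analytic. For (ii), evaluating $\Lambda$ at $P^{\ast}$ one checks that all $\epsilon$-dependent terms in $\Lambda^i$ carrying a prefactor $\epsilon$ vanish, while in $\Lambda^o$ the kernel $\nabla S_2(x-\epsilon s)$ collapses to $\nabla S_2(x)$, so the factor $\int_{\partial\omega^i}\mu^i\,d\sigma$ comes out of the integral. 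With $\gamma_1=0,\gamma_2=l_0,\gamma_3=\eta_0,\gamma_4=r_0$, the resulting system is precisely \eqref{eq:limsys:1a2}-\eqref{eq:limsys:1b2}, which $(\tilde{\mu}^o,\tilde{\mu}^i,\tilde{\xi})$ solves by \eqref{eq:exsol:2}.

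Step (iii) is the main obstacle. The differential applied to $(\bar{\mu}^o,\bar{\mu}^i,\bar{\xi})$ gives the system
\[
-\tfrac{1}{2}\bar{\mu}^o+W^{\ast}[\partial\Omega^o,\bar{\mu}^o]+\nu_{\Omega^o}(\cdot)\cdot\nabla S_2(\cdot)\int_{\partial\omega^i}\bar{\mu}^i\,d\sigma=f^o,
\]
\[
\tfrac{1}{2}\bar{\mu}^i+W^{\ast}[\partial\omega^i,\bar{\mu}^i]-\partial_\tau\tilde{F}(\tau^{\ast},\eta_0)\Bigl[\tfrac{l_0}{2\pi}\int_{\partial\omega^i}\bar{\mu}^i\,d\sigma+\bar{\xi}\Bigr]=f^i,
\]
where $\tau^{\ast}=\tfrac{l_0}{2\pi}\int_{\partial\Omega^o}g^o\,d\sigma+\tilde{\xi}$, invoking \eqref{eq:limsys:1c2bis}. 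I would decouple the system as follows. Integrating the first equation over $\partial\Omega^o$ and using the identities quoted before \eqref{eq:limsys:1a2bis}-\eqref{eq:limsys:1c2bis} together with $\int\bar{\mu}^o=0$ forces $\int_{\partial\omega^i}\bar{\mu}^i\,d\sigma=\int_{\partial\Omega^o}f^o\,d\sigma$; substituting this value, $\bar{\mu}^o$ is uniquely determined by \cite[Theorem 6.25]{DaLaMu21}. Next, integrating the second equation over $\partial\omega^i$ and using $\int_{\partial\omega^i}W^{\ast}[\partial\omega^i,\bar{\mu}^i]\,d\sigma=\tfrac{1}{2}\int_{\partial\omega^i}\bar{\mu}^i\,d\sigma$ yields a scalar equation where the coefficient of $\bar{\xi}$ is $-|\partial\omega^i|_1\,\partial_\tau\tilde{F}(\tau^{\ast},\eta_0)\neq 0$ by the standing hypothesis; this determines $\bar{\xi}$ uniquely. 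Once $\bar{\xi}$ is known, the right-hand side of the second equation is a fixed H\"older function, and $\bar{\mu}^i$ is recovered uniquely from \cite[Corollary 6.15]{DaLaMu21}. Boundedness of all inverses shows the differential is a linear homeomorphism.

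Finally, (iv) follows from \eqref{eq:Lmbd:limass} and \eqref{eq:addass:1}: since $\epsilon\delta(\epsilon)\to 0$, $\epsilon\delta(\epsilon)\log\epsilon\to l_0$, $\eta(\epsilon)\to\eta_0$, $\epsilon/\rho(\epsilon)\to r_0$ as $\epsilon\to 0^+$, we may shrink $\epsilon_2\in{]}0,\epsilon_1[$ so that the tuple enters $\mathcal{U}$ for all $\epsilon\in{]}0,\epsilon_2[$. The analytic Implicit Function Theorem then yields the neighborhoods $\mathcal{U},\mathcal{V}$ and the real analytic map $(M^o,M^i,\Xi)$; the identification $(M^o,M^i,\Xi)[0,0,l_0,\eta_0,r_0]=(\tilde{\mu}^o,\tilde{\mu}^i,\tilde{\xi})$ is the usual uniqueness clause in the Implicit Function Theorem.
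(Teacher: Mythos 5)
Your proposal is correct and, in outline, follows the paper's strategy: the same analyticity ingredients for $\Lambda$, the analytic Implicit Function Theorem applied at $(0,0,l_0,\eta_0,r_0,\tilde{\mu}^o,\tilde{\mu}^i,\tilde{\xi})$, and the same decoupling order for the differential (integrate the $\partial\Omega^o$-equation to extract $\int_{\partial\omega^i}\bar{\mu}^i\,d\sigma$, determine $\bar{\mu}^o$ via \cite[Theorem 6.25]{DaLaMu21}, integrate the $\partial\omega^i$-equation to determine $\bar{\xi}$ using $\partial_\tau\tilde{F}\neq 0$, then recover $\bar{\mu}^i$ via \cite[Corollary 6.15]{DaLaMu21}); your explicit verification that $\Lambda$ vanishes at the base point is left implicit in the paper but is correct and indeed needed. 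The one genuine divergence is in how the homeomorphism property of $\partial_{(\mu^o,\mu^i,\xi)}\Lambda$ is established. The paper first observes that the differential is Fredholm of index $0$ (an invertible operator plus a compact one), so that injectivity alone suffices, and then runs the decoupling only on the \emph{homogeneous} system. You instead solve the \emph{inhomogeneous} system with data $(f^o,f^i)$ directly, aiming at injectivity and surjectivity simultaneously; this avoids the Fredholm/compactness observation altogether, with the homeomorphism property then following from boundedness of each solution step (or the open mapping theorem). The price of your route is one verification the paper's route never needs: your construction solves the second equation after replacing $\int_{\partial\omega^i}\bar{\mu}^i\,d\sigma$ by the value $\int_{\partial\Omega^o}f^o\,d\sigma$ forced by the first equation, so for \emph{existence} you must still check that the density $\bar{\mu}^i$ returned by \cite[Corollary 6.15]{DaLaMu21} actually has that integral. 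The check does succeed: integrating the equation you hand to \cite[Corollary 6.15]{DaLaMu21} and using the scalar equation defining $\bar{\xi}$ gives
\[
\int_{\partial \omega^i}\bar{\mu}^i\,d\sigma=\int_{\partial \omega^i}f^i\,d\sigma+\Bigl(\int_{\partial \Omega^o}f^o\,d\sigma-\int_{\partial \omega^i}f^i\,d\sigma\Bigr)=\int_{\partial \Omega^o}f^o\,d\sigma\, ,
\]
but as written your argument only asserts that $\bar{\mu}^i$ is ``recovered uniquely,'' which by itself establishes injectivity, not surjectivity; make this consistency step explicit, since otherwise you would have to fall back on the paper's Fredholm argument to convert injectivity into bijectivity.
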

\begin{proof}
Standard results of classical potential theory (see, {\it e.g.}, \cite{DaLaMu21}, Miranda \cite{Mi65}, Lanza de Cristoforis and Rossi \cite{LaRo04}),  real analyticity results for integral operators with real analytic kernel (\cite{LaMu13}),  assumption \eqref{eq:addass:1} and real analyticity results for the composition operator (\cite[p.~10]{BoTo73}, \cite{He82}, and Valent \cite[Thm.~5.2]{Va88}) imply that $\Lambda$ is a real analytic operator from $]-\epsilon_1,\epsilon_1[\times \mathbb{R}^{m+3}\times C^{0,\alpha}(\partial\Omega^o)_0\times C^{0,\alpha}(\partial\omega^i)\times \mathbb{R}$ to $C^{0,\alpha}(\partial\Omega^o)\times C^{0,\alpha}(\partial\omega^i)$. We verify that by standard calculus in Banach space the partial differential $\partial_{(\mu^o,\mu^i,\xi)}\Lambda[0,0,l_0,\eta_0,r_0,\tilde{\mu}^o,\tilde{\mu}^i,\tilde{\xi}]$ of $\Lambda$ at $(0,0,l_0,\eta_0,r_0,\tilde{\mu}^o,\tilde{\mu}^i,\tilde{\xi})$ with respect to the variable $(\mu^o,\mu^i,\xi)$ is delivered by
\begin{align}
\partial_{(\mu^o,\mu^i,\xi)}&\Lambda^o[0,0,l_0,\eta_0,r_0,\tilde{\mu}^o,\tilde{\mu}^i,\tilde{\xi}](\overline{\mu}^o,\overline{\mu}^i,\overline{\xi})(x)\nonumber\\&\equiv-\frac{1}{2}\overline{\mu}^o(x)+\int_{\partial \Omega^o}\nu_{\Omega^o}(x)\cdot \nabla S_2(x-y)\overline{\mu}^o(y)\, d\sigma_y\nonumber\\
&\qquad+\nu_{\Omega^o}(x)\cdot \nabla S_2(x)\int_{\partial \omega^i}\overline{\mu}^i(s)\, d\sigma_s \qquad \forall x \in \partial \Omega^o\, ,\nonumber
\\
\partial_{(\mu^o,\mu^i,\xi)}&\Lambda^i[0,0,l_0,\eta_0,r_0,\tilde{\mu}^o,\tilde{\mu}^i,\tilde{\xi}](\overline{\mu}^o,\overline{\mu}^i,\overline{\xi})(t)\nonumber\\&\equiv\frac{1}{2}\overline{\mu}^i(t)+\int_{\partial \omega^i}\nu_{\omega^i}(t)\cdot \nabla S_2(t-s)\overline{\mu}^i(s)\, d\sigma_s \nonumber\\
&-\partial_\tau \tilde{F} \Bigg(\frac{l_0}{2\pi}\int_{\partial \Omega^o} g^o\, d\sigma+\tilde{\xi},\eta_0\Bigg) \Bigg(\frac{l_0}{2\pi} \int_{\partial \omega^i}\overline{\mu}^i\, d\sigma+\overline{\xi}\Bigg)\qquad \forall t \in \partial \omega^i\, ,\nonumber
\end{align}
for all $(\overline{\mu}^o,\overline{\mu}^i,\overline{\xi}) \in C^{0,\alpha}(\partial\Omega^o)_0\times C^{0,\alpha}(\partial\omega^i)\times \mathbb{R}$. The next step is to prove that the partial differential $\partial_{(\mu^o,\mu^i,\xi)}\Lambda[0,0,l_0,\eta_0,r_0,\tilde{\mu}^o,\tilde{\mu}^i,\tilde{\xi}]$ is a homeomorphism from $C^{0,\alpha}(\partial\Omega^o)_0\times C^{0,\alpha}(\partial\omega^i)\times \mathbb{R}$ onto $C^{0,\alpha}(\partial\Omega^o) \times C^{0,\alpha}(\partial\omega^i)$.  We observe that the partial differential $\partial_{(\mu^o,\mu^i,\xi)}\Lambda[0,0,l_0,\eta_0,r_0,\tilde{\mu}^o,\tilde{\mu}^i,\tilde{\xi}]$ is a Fredholm operator of index $0$: indeed it is the sum of an invertible operator and a compact operator. As a consequence, to prove that the operator $\partial_{(\mu^o,\mu^i,\xi)}\Lambda[0,0,l_0,\eta_0,r_0,\tilde{\mu}^o,\tilde{\mu}^i,\tilde{\xi}]$ is homeomorphism, it is enough to show that it is injective. Therefore, let us assume that
\[
\partial_{(\mu^o,\mu^i,\xi)}\Lambda[0,0,l_0,\eta_0,r_0,\tilde{\mu}^o,\tilde{\mu}^i,\tilde{\xi}](\overline{\mu}^o,\overline{\mu}^i,\overline{\xi})=0\, .
\]
We integrate equality
\[
\partial_{(\mu^o,\mu^i,\xi)}\Lambda^o[0,0,l_0,\eta_0,r_0,\tilde{\mu}^o,\tilde{\mu}^i,\tilde{\xi}](\overline{\mu}^o,\overline{\mu}^i,\overline{\xi})(x)=0 \qquad \forall x \in \partial \Omega^o\, ,
\]
that together with the equalities
\[
\int_{\partial \Omega^o}\int_{\partial \Omega^o}\nu_{\Omega^o}(x)\cdot \nabla S_2(x-y)\overline{\mu}^o(y)\, d\sigma_y\, d\sigma_x =\frac{1}{2}\int_{\partial \Omega^o}\overline{\mu}^o(y)\, d\sigma_y\,
\]
(cf.~\cite[Lemma~6.11]{DaLaMu21}) and
\[
\int_{\partial \Omega^o}\nu_{\Omega^o}(x)\cdot \nabla S_2(x)\, d\sigma_x=1\,
\]
(cf.~\cite[Corollary~4.6]{DaLaMu21}), implies
\begin{equation}\label{eq:diffLmbd:int2}
\int_{\partial \omega^i}\overline{\mu}^i(s)\, d\sigma_s=0\, .
\end{equation}
Accordingly,
\[
-\frac{1}{2}\overline{\mu}^o(x)+\int_{\partial \Omega^o}\nu_{\Omega^o}(x)\cdot \nabla S_2(x-y)\overline{\mu}^o(y)\, d\sigma_y=0 \qquad \forall x \in \partial \Omega^o\, .
\]
Since $\int_{\partial \Omega^o}\overline{\mu}^o\, d\sigma=0$, by \cite[Theorem 6.25]{DaLaMu21}  we have $\overline{\mu}^o=0$. Then we note that by \eqref{eq:diffLmbd:int2} equality
\[
\partial_{(\mu^o,\mu^i,\xi)}\Lambda^i[0,0,l_0,\eta_0,r_0,\tilde{\mu}^o,\tilde{\mu}^i,\tilde{\xi}](\overline{\mu}^o,\overline{\mu}^i,\overline{\xi})(t)=0 \qquad \forall t \in \partial \omega^i\,
\]
reads as
\begin{equation}\label{eq:diffLmbd:1d2}
\begin{split}
\frac{1}{2}\overline{\mu}^i(t)+\int_{\partial \omega^i}\nu_{\omega^i}(t)\cdot \nabla S_2(t-s)\overline{\mu}^i(s)\, d\sigma_s-\partial_\tau \tilde{F} \Bigg(\frac{l_0}{2\pi}\int_{\partial \Omega^o} g^o\, d\sigma+\tilde{\xi},\eta_0\Bigg) \overline{\xi}=0 \quad \forall t \in \partial \omega^i\, .
\end{split}
\end{equation}
By equality \eqref{eq:diffLmbd:int2}, by
\[
\int_{\partial \omega^i}\int_{\partial \omega^i}\nu_{\omega^i}(t)\cdot \nabla S_2(t-s)\overline{\mu}^i(s)\, d\sigma_s\, d\sigma_t =\frac{1}{2}\int_{\partial \omega^i}\overline{\mu}^i(s)\, d\sigma_s\,
\]
(cf.~\cite[Lemma~6.11]{DaLaMu21}), and by integrating \eqref{eq:diffLmbd:1d2} on $\partial \omega^i$, we deduce that $\overline{\xi}=0$. Then  \cite[Corollary 6.15]{DaLaMu21} implies that $\overline{\mu}^i=0$. Hence, we have shown that the operator $\partial_{(\mu^o,\mu^i,\xi)}\Lambda[0,0,l_0,\eta_0,r_0,\tilde{\mu}^o,\tilde{\mu}^i,\tilde{\xi}]$ is injective, and as a consequence, being a Fredholm operator of index $0$, also a homeomorphism. Therefore, we can apply  the Implicit Function Theorem for real analytic maps in Banach spaces (cf.~Deimling \cite[Thm.~15.3]{De85}) and deduce that there exist $\epsilon_2 \in ]0,\epsilon_1[$, an open neighborhood $\mathcal{U}$ of $(0,l_0,\eta_0,r_0)$ in $\mathbb{R}^{m+3}$, an open neighborhood $\mathcal{V}$ of $(\tilde{\mu}^o,\tilde{\mu}^i,\tilde{\xi})$ in $C^{0,\alpha}(\partial \Omega^o)_0\times C^{0,\alpha}(\partial \omega^i) \times \mathbb{R}$, and a real analytic map $(M^o,M^i, \Xi)$ from $]-\epsilon_2,\epsilon_2[\times \mathcal{U}$ to $\mathcal{V}$ such that
\[
\Bigg(\epsilon\delta(\epsilon),\epsilon\delta(\epsilon)\log \epsilon,\eta(\epsilon),\frac{\epsilon}{\rho(\epsilon)}\Bigg) \in \mathcal{U}\qquad \forall \epsilon \in ]0,\epsilon_2[\, ,
\]
and such that the set of zeros of $\Lambda$ in
$]-\epsilon_2,\epsilon_2[\times \mathcal{U}\times \mathcal{V}$ coincides with the graph of $(M^o,M^i,\Xi)$. In particular,
\[
\Bigg(M^o[0,0,l_0,\eta_0,r_0],M^i[0,0,l_0,\eta_0,r_0],\Xi[0,0,l_0,\eta_0,r_0]\Bigg)=(\tilde{\mu}^o,\tilde{\mu}^i,\tilde{\xi})\,,
\]
and thus the proof is complete.
\end{proof}

By Proposition \ref{prop:Lmbd2} we know that there exists a family of solutions of the system of integral equations \eqref{eq:corr:1a2}-\eqref{eq:corr:1b2}. Then we can exploit the representation formula of Proposition \ref{prop:corr}  and introduce a family of solutions to \eqref{bvpdelta}. We do so in the following Definition \ref{def:udelta2}.

\begin{definition}\label{def:udelta2}
Let the assumptions of Proposition \ref{prop:Lmbd2} hold. Then we set
\[
\begin{split}
u(\epsilon,x)=&\int_{\partial \Omega^o}S_2(x-y)M^o[\epsilon,\epsilon\delta(\epsilon),\epsilon\delta(\epsilon)\log \epsilon, \eta(\epsilon),\frac{\epsilon}{\rho(\epsilon)}](y)\, d\sigma_y\\
&+ \int_{\partial \omega^i}S_2(x-\epsilon s)M^i[\epsilon,\epsilon\delta(\epsilon),\epsilon\delta(\epsilon)\log \epsilon, \eta(\epsilon),\frac{\epsilon}{\rho(\epsilon)}](s)\, d\sigma_s\\
&+\frac{\Xi[\epsilon,\epsilon\delta(\epsilon),\epsilon\delta(\epsilon)\log \epsilon, \eta(\epsilon),\frac{\epsilon}{\rho(\epsilon)}]}{\delta(\epsilon)\epsilon}\qquad \forall x \in \overline{\Omega(\epsilon)}\, ,
\end{split}
\]
for all $\epsilon \in ]0,\epsilon_2[$.
\end{definition}

We  are ready to exploit the representation formula of Proposition \ref{prop:corr} and  the analyticity result of  Proposition \ref{prop:Lmbd2} concerning the solutions of the system of integral equations \eqref{eq:corr:1a2}-\eqref{eq:corr:1b2}  in order to prove formulas for  suitable restrictions of the solutions $u(\epsilon,\cdot)$  and for the corresponding energy integral in terms of real analytic maps.

We start by considering the restriction of the solution $u(\epsilon,\cdot)$ to a set which is ``far'' from the point where the hole degenerates.

\begin{theorem}\label{thm:rep2}
Let the assumptions of Proposition \ref{prop:Lmbd2} hold.  Let $\Omega_M$ be a bounded open subset of $\Omega^o$ such that $0 \not \in \overline{\Omega_M}$. Then there exist $\epsilon_M \in ]0,\epsilon_2[$ and a real analytic map $U_M$ from $]-\epsilon_M,\epsilon_M[\times \mathcal{U}$ to  $C^{1,\alpha}(\overline{ \Omega_M})$ such that
\[
\overline{\Omega_M}\subseteq \overline{\Omega(\epsilon)}\qquad \forall \epsilon \in ]0,\epsilon_M[\, ,
\]
and that
\begin{equation}\label{eq:rep2:a}
\begin{split}
u(\epsilon,x)= U_M[\epsilon,\epsilon\delta(\epsilon),\epsilon\delta(\epsilon)\log \epsilon, \eta(\epsilon),\frac{\epsilon}{\rho(\epsilon)}](x)+&\frac{\Xi[\epsilon,\epsilon\delta(\epsilon),\epsilon\delta(\epsilon)\log \epsilon, \eta(\epsilon),\frac{\epsilon}{\rho(\epsilon)}]}{\delta(\epsilon)\epsilon}\\&\qquad\qquad\qquad\qquad\forall x\in \overline{\Omega_M}\,,
\end{split}
\end{equation}
for all $\epsilon \in ]0,\epsilon_M[$. Moreover, if we set
\[
\begin{split}
\tilde{u}_{M}(x)\equiv  &\int_{\partial \Omega^o}S_2(x-y)\tilde{\mu}^o(y)\, d\sigma_y\qquad \forall x \in \overline{\Omega^o}\, ,
\end{split}
\]
we  have that $U_M[0,0,l_0,\eta_0,r_0]=\tilde{u}_{M |\overline{\Omega_M}}+S_{2|\overline{\Omega_M}}\int_{\partial \Omega^o}g^o\, d\sigma$, and $\tilde{u}_M$ is a solution of the Neumann problem
\begin{equation}
\label{eq:rep2:1}
\left\{
\begin{array}{ll}
\Delta u(x)=0 & \forall x \in \Omega^o\,,\\
\frac{\partial}{\partial \nu_{\Omega^o}}u(x)=g^o(x)-\frac{\partial}{\partial \nu_{\Omega^o}}S_{2}(x)\int_{\partial \Omega^o}g^o\, d\sigma & \forall x \in \partial \Omega^o\, .
\end{array}
\right.
\end{equation}
\end{theorem}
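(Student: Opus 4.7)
The plan is to combine Definition \ref{def:udelta2} with the real analytic dependence of $(M^o, M^i, \Xi)$ provided by Proposition \ref{prop:Lmbd2} and standard analyticity results for integral operators with real analytic kernels. First, since $0 \notin \overline{\Omega_M}$, I would pick $\epsilon_M \in \mathopen]0,\epsilon_2\mathclose[$ small enough that $\epsilon \overline{\omega^i} \cap \overline{\Omega_M} = \emptyset$ for every $\epsilon \in \mathopen]-\epsilon_M,\epsilon_M\mathclose[$; this both yields $\overline{\Omega_M} \subseteq \overline{\Omega(\epsilon)}$ on the positive part of the interval and ensures that $|x-\epsilon s|$ stays bounded away from $0$ on $\overline{\Omega_M}\times \partial\omega^i$. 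Motivated by Definition \ref{def:udelta2}, I define
\[
U_M[\epsilon,\gamma_1,\gamma_2,\gamma_3,\gamma_4](x) \equiv \int_{\partial\Omega^o} S_2(x-y)\, M^o[\epsilon,\gamma_1,\gamma_2,\gamma_3,\gamma_4](y)\, d\sigma_y + \int_{\partial\omega^i} S_2(x-\epsilon s)\, M^i[\epsilon,\gamma_1,\gamma_2,\gamma_3,\gamma_4](s)\, d\sigma_s
\]
for $x\in \overline{\Omega_M}$, so that \eqref{eq:rep2:a} is an immediate consequence of Definition \ref{def:udelta2}.

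The real analyticity of $U_M$ is then verified term by term. The first term is the composition of the real analytic map $M^o$ from Proposition \ref{prop:Lmbd2} with the bounded linear (hence real analytic) operator $\mu^o \mapsto v^+[\partial\Omega^o,\mu^o]_{|\overline{\Omega_M}}$ from $C^{0,\alpha}(\partial\Omega^o)_0$ into $C^{1,\alpha}(\overline{\Omega_M})$, using that $\overline{\Omega_M}\subseteq \overline{\Omega^o}$ and $v^+[\partial\Omega^o,\mu^o]\in C^{1,\alpha}(\overline{\Omega^o})$. For the second term, the kernel $(\epsilon,x,s) \mapsto S_2(x-\epsilon s)$ is real analytic on a neighborhood of $\mathopen]-\epsilon_M,\epsilon_M\mathclose[\times \overline{\Omega_M}\times \partial\omega^i$ by the distance estimate above, so the real analyticity theorems for integral operators with real analytic kernel from \cite{LaMu13} give joint real analyticity in $(\epsilon,\mu^i)$ with values in $C^{1,\alpha}(\overline{\Omega_M})$; composing with the real analytic $M^i$ yields the desired conclusion.

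To identify $U_M[0,0,l_0,\eta_0,r_0]$, I would substitute $M^o[0,0,l_0,\eta_0,r_0]=\tilde{\mu}^o$ and $M^i[0,0,l_0,\eta_0,r_0]=\tilde{\mu}^i$: the first integral reduces exactly to $\tilde{u}_M(x)$, while the second, at $\epsilon=0$, becomes $S_2(x)\int_{\partial\omega^i}\tilde{\mu}^i\, d\sigma$, which by the integral-mean constraint \eqref{eq:limsys:1c2bis} derived before Proposition \ref{prop:Lmbd2} equals $S_2(x)\int_{\partial\Omega^o}g^o\, d\sigma$. Finally, $\tilde{u}_M$ is harmonic in $\Omega^o$ since it is a single layer potential supported on $\partial\Omega^o$, and the jump relation $\frac{\partial}{\partial\nu_{\Omega^o}}v^+[\partial\Omega^o,\tilde{\mu}^o] = -\tfrac{1}{2}\tilde{\mu}^o + W^{\ast}[\partial\Omega^o,\tilde{\mu}^o]$ combined with the equation \eqref{eq:limsys:1a2bis} that determined $\tilde{\mu}^o$ yields exactly the Neumann datum of \eqref{eq:rep2:1}. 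The only nontrivial technical ingredient is the joint real analyticity of the second integral in $(\epsilon,\mu^i)$ via \cite{LaMu13}; the remaining steps are direct applications of Proposition \ref{prop:Lmbd2}, the jump formulas, and the constraint \eqref{eq:limsys:1c2bis}.
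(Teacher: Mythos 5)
Your proposal is correct and follows essentially the same route as the paper's proof: choose $\epsilon_M$ so that $\overline{\Omega_M}\cap\epsilon\overline{\omega^i}=\emptyset$, define $U_M$ as the sum of the two layer potentials with densities $M^o$ and $M^i$ (the latter with kernel $S_2(x-\epsilon s)$), invoke Proposition \ref{prop:Lmbd2} together with the analyticity results of \cite{LaMu13}, and identify the limiting value via $M^o[0,0,l_0,\eta_0,r_0]=\tilde{\mu}^o$, $M^i[0,0,l_0,\eta_0,r_0]=\tilde{\mu}^i$ and the constraint \eqref{eq:limsys:1c2bis}. Your write-up merely fills in details the paper leaves implicit (the term-by-term analyticity argument and the explicit use of the jump formula with \eqref{eq:limsys:1a2bis} to recover the Neumann datum), all of which are correct.
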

\begin{proof}
We can take $\epsilon_M \in ]0,\epsilon_2[$ small enough so that
\[
\overline{\Omega_M}\cap \epsilon\overline{\omega^i}=\emptyset\qquad  \forall  \epsilon \in ]-\epsilon_M,\epsilon_M[\, .
\]
Recalling  Definition \ref{def:udelta2}, we set
\[
\begin{split}
U_M[\epsilon,\gamma_1,\gamma_2, \gamma_3,\gamma_4](x)\equiv &\int_{\partial \Omega^o}S_2(x-y)M^o[\epsilon,\gamma_1,\gamma_2, \gamma_3,\gamma_4](y)\, d\sigma_y\\
&+ \int_{\partial \omega^i}S_2(x-\epsilon s)M^i[\epsilon,\gamma_1,\gamma_2, \gamma_3,\gamma_4](s)\, d\sigma_s\qquad \forall x \in \overline{\Omega_M}\, ,
\end{split}
\]
for all $(\epsilon,\gamma_1,\gamma_2, \gamma_3,\gamma_4) \in ]-\epsilon_M,\epsilon_M[\times \mathcal{U}$. Then Proposition \ref{prop:Lmbd2} and real analyticity results for integral operators with real analytic kernel (cf.~\cite{LaMu13}) imply that $U_M$ is a real analytic map from $]-\epsilon_M,\epsilon_M[\times \mathcal{U}$ to  $C^{1,\alpha}(\overline{ \Omega_M})$ and that equality \eqref{eq:rep2:a} holds. By Proposition \ref{prop:Lmbd2}, we also have $U_M[0,0,l_0,\eta_0,r_0]=\tilde{u}_{M|\overline{\Omega_M}}+S_{2|\overline{\Omega_M}}\int_{\partial \Omega^o}g^o\, d\sigma$, Moreover,  standard properties of the single layer potential (cf.~\cite[\S 4.4]{DaLaMu21}) imply that $\tilde{u}_M$ is a solution of the Neumann problem \eqref{eq:rep2:1}. 
\end{proof}

Then we consider the behavior of the rescaled solution $u(\epsilon,\epsilon t)$.

\begin{theorem}\label{thm:rep2micro}
Let the assumptions of Proposition \ref{prop:Lmbd2} hold. {Let $Z_m$ be the real analytic map from $ ]-\epsilon_2,\epsilon_2[\times \mathcal{U}$ to $\mathbb{R}$ defined by
\[
Z_m[\epsilon,\gamma_1,\gamma_2, \gamma_3,\gamma_4](t)\equiv   \int_{\partial \omega^i}M^i[\epsilon,\gamma_1,\gamma_2, \gamma_3,\gamma_4](s)\, d\sigma_s \, ,
\]
for all $(\epsilon,\gamma_1,\gamma_2, \gamma_3,\gamma_4) \in ]-\epsilon_2,\epsilon_2[\times \mathcal{U}$.} Let $\Omega_m$ be a bounded open subset of $\mathbb{R}^2\setminus \overline{\omega^i}$. Then there exist $\epsilon_m \in ]0,\epsilon_2[$ and a real analytic map $U_m$ from $]-\epsilon_m,\epsilon_m[\times \mathcal{U}$ to  $C^{1,\alpha}(\overline{ \Omega_m})$ such that
\[
\epsilon\overline{\Omega_m}\subseteq \overline{\Omega(\epsilon)}\qquad \forall \epsilon \in ]0,\epsilon_m[\, ,
\]
and that
\[
\begin{split}
u(\epsilon,\epsilon t)= U_m[\epsilon,\epsilon\delta(\epsilon),\epsilon\delta(\epsilon)\log \epsilon,\eta(\epsilon),\frac{\epsilon}{\rho(\epsilon)}&](t)+\frac{\log \epsilon}{2\pi}Z_m[\epsilon,\epsilon\delta(\epsilon),\epsilon\delta(\epsilon)\log \epsilon,\eta(\epsilon),\frac{\epsilon}{\rho(\epsilon)}]\\
+&\frac{\Xi[\epsilon,\epsilon\delta(\epsilon),\epsilon\delta(\epsilon)\log \epsilon,\eta(\epsilon),\frac{\epsilon}{\rho(\epsilon)}]}{\delta(\epsilon)\epsilon} \qquad\forall t\in \overline{\Omega_m}\,,
\end{split}
\]
for all $\epsilon \in ]0,\epsilon_m[$. Moreover, if we set
\[
\begin{split}
\tilde{u}_{m}(t)\equiv  &\int_{\partial \omega^i}S_2(t-s)\tilde{\mu}^i(s)\, d\sigma_s+  \int_{\partial \Omega^o}S_2(y)\tilde{\mu}^o(y)\, d\sigma_y \qquad \forall t \in \mathbb{R}^2 \setminus \omega^i\, ,
\end{split}
\]
we  have that $U_m[0,0,l_0,\eta_0,r_0]=\tilde{u}_{m |\overline{\Omega_m}}$, and $\tilde{u}_m$ is a solution of the Neumann problem
\begin{equation}
\begin{split}
\label{eq:rep2micro:1}
\left\{
\begin{array}{ll}
\Delta u(t)=0 & \forall t \in\mathbb{R}^2 \setminus \overline{\omega^i}\,,\\
\frac{\partial}{\partial \nu_{\omega^i}}u(t)=\tilde{F} \Bigg(\frac{l_0}{2\pi} \int_{\partial \Omega^o}g^o\, d\sigma+\tilde{\xi},\eta_0\Bigg)+g^i(t)r_0& \forall t \in \partial \omega^i\, ,
\end{array}
\right.\end{split}
\end{equation}
and
\[
Z_m[0,0,l_0,\eta_0,r_0]=\int_{\partial \Omega^o}g^o\, d\sigma\, .
\]
\end{theorem}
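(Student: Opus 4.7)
The plan is to adapt the argument of Theorem~\ref{thm:rep2} to the rescaled variable $\epsilon t$. Substituting $x=\epsilon t$ into the representation provided by Definition~\ref{def:udelta2} and using the key algebraic identity
\[
S_2(\epsilon t - \epsilon s) = \frac{\log\epsilon}{2\pi} + S_2(t-s),
\]
the integral on $\partial\omega^i$ splits into the term $\frac{\log\epsilon}{2\pi}\int_{\partial\omega^i}M^i[\cdot]\,d\sigma = \frac{\log\epsilon}{2\pi}Z_m[\cdot]$ and a standard exterior single layer potential on $\partial\omega^i$ evaluated at $t$. The contribution from $\Xi/(\delta(\epsilon)\epsilon)$ is carried over unchanged.

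Accordingly I would define
\[
\begin{split}
U_m[\epsilon,\gamma_1,\gamma_2,\gamma_3,\gamma_4](t) \equiv &\int_{\partial \Omega^o} S_2(\epsilon t - y) M^o[\epsilon,\gamma_1,\gamma_2,\gamma_3,\gamma_4](y)\,d\sigma_y \\
&+ \int_{\partial \omega^i} S_2(t - s) M^i[\epsilon,\gamma_1,\gamma_2,\gamma_3,\gamma_4](s)\,d\sigma_s
\end{split}
\]
for $(\epsilon,\gamma_1,\gamma_2,\gamma_3,\gamma_4)\in\,]-\epsilon_m,\epsilon_m[\,\times\mathcal{U}$ and $t\in\overline{\Omega_m}$, where $\epsilon_m\in\,]0,\epsilon_2[$ is chosen small enough so that $\epsilon\overline{\Omega_m}$ stays inside a compact subset of $\Omega^o$ disjoint from $\partial\Omega^o$ (possible since $\overline{\Omega_m}$ is compact and $0\in\Omega^o$) and simultaneously $\epsilon\overline{\Omega_m}\subseteq\overline{\Omega(\epsilon)}$. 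Real analyticity of $U_m$ with values in $C^{1,\alpha}(\overline{\Omega_m})$ then follows from Proposition~\ref{prop:Lmbd2} (which gives analyticity of $M^o$ and $M^i$), from the joint real analyticity of the kernel $(\epsilon,t,y)\mapsto S_2(\epsilon t - y)$ on the relevant compact set together with real analyticity of integral operators with analytic kernel (cf.~\cite{LaMu13}), and from continuity of the exterior single layer operator $\mu\mapsto v^-[\partial\omega^i,\mu]|_{\overline{\Omega_m}}$ from $C^{0,\alpha}(\partial\omega^i)$ to $C^{1,\alpha}(\overline{\Omega_m})$, available because $\overline{\Omega_m}\subseteq\mathbb{R}^2\setminus\omega^i$. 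The map $Z_m$ is real analytic as a continuous linear functional composed with the analytic map $M^i$.

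To identify the value at $\epsilon=0$, I evaluate at $(0,0,l_0,\eta_0,r_0)$ and use that $S_2$ is even to collapse $\int_{\partial\Omega^o}S_2(-y)\tilde{\mu}^o(y)\,d\sigma_y$ to $\int_{\partial\Omega^o}S_2(y)\tilde{\mu}^o(y)\,d\sigma_y$, yielding $U_m[0,0,l_0,\eta_0,r_0]=\tilde{u}_m|_{\overline{\Omega_m}}$. Harmonicity of $\tilde{u}_m$ in $\mathbb{R}^2\setminus\overline{\omega^i}$ is immediate since it is the sum of an exterior single layer potential and a constant. For the Neumann boundary condition in \eqref{eq:rep2micro:1}, I apply the jump formula
\[
\frac{\partial v^-[\partial\omega^i,\tilde{\mu}^i]}{\partial\nu_{\omega^i}}=\frac{1}{2}\tilde{\mu}^i+W^{\ast}[\partial\omega^i,\tilde{\mu}^i]
\]
together with the limiting equation \eqref{eq:limsys:1b2}, replacing $\int_{\partial\omega^i}\tilde{\mu}^i\,d\sigma$ by $\int_{\partial\Omega^o}g^o\,d\sigma$ via the compatibility identity obtained before Proposition~\ref{prop:Lmbd2} by integrating \eqref{eq:limsys:1a2}. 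The same identity immediately gives $Z_m[0,0,l_0,\eta_0,r_0]=\int_{\partial\Omega^o}g^o\,d\sigma$.

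The main technical point is the simultaneous bookkeeping of the smallness conditions on $\epsilon_m$ (ensuring both the analyticity of the first term and the validity of the representation formula on $\epsilon\overline{\Omega_m}$), together with a careful tracking of the signs in the exterior jump relation. No genuinely new analytical tool is needed beyond those already invoked in Theorem~\ref{thm:rep2}; the only novelty relative to the ``macroscopic'' statement is the appearance of the $\frac{\log\epsilon}{2\pi}Z_m$ term, which is a direct consequence of the logarithmic splitting of $S_2$ recorded above.
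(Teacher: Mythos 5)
Your proposal is correct and follows essentially the same route as the paper's proof: substitute $x=\epsilon t$ in the representation of Definition \ref{def:udelta2}, split off the $\frac{\log\epsilon}{2\pi}Z_m$ term via $S_2(\epsilon t-\epsilon s)=S_2(t-s)+\frac{\log\epsilon}{2\pi}$, define $U_m$ exactly as the paper does, and invoke Proposition \ref{prop:Lmbd2} together with the analyticity results of \cite{LaMu13}. The only difference is that you spell out details the paper compresses into citations — the evenness of $S_2$, the exterior jump relation, and the substitution of $\int_{\partial\omega^i}\tilde{\mu}^i\,d\sigma$ by $\int_{\partial\Omega^o}g^o\,d\sigma$ via the compatibility identity — all of which are handled correctly.
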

\begin{proof}
We take $\epsilon_m \in ]0,\epsilon_2[$ small enough and we can assume that
\[
\epsilon\overline{\Omega_m}\subseteq \overline{\Omega^o}\qquad  \forall  \epsilon \in ]-\epsilon_m,\epsilon_m[\, .
\]
If $\epsilon \in ]0,\epsilon_m[$ then
\[
\begin{split}
u(\epsilon,\epsilon t)=&\int_{\partial \Omega^o}S_2(\epsilon t-y)M^o[\epsilon,\epsilon\delta(\epsilon),\epsilon\delta(\epsilon)\log \epsilon, \eta(\epsilon),\frac{\epsilon}{\rho(\epsilon)}](y)\, d\sigma_y\\
&+ \int_{\partial \omega^i}S_2(\epsilon t-\epsilon s)M^i[\epsilon,\epsilon\delta(\epsilon),\epsilon\delta(\epsilon)\log \epsilon, \eta(\epsilon),\frac{\epsilon}{\rho(\epsilon)}](s)\, d\sigma_s\\
&+\frac{\Xi[\epsilon,\epsilon\delta(\epsilon),\epsilon\delta(\epsilon)\log \epsilon, \eta(\epsilon),\frac{\epsilon}{\rho(\epsilon)}]}{\delta(\epsilon)\epsilon}\\
=&\int_{\partial \Omega^o}S_2(\epsilon t-y)M^o[\epsilon,\epsilon\delta(\epsilon),\epsilon\delta(\epsilon)\log \epsilon, \eta(\epsilon),\frac{\epsilon}{\rho(\epsilon)}](y)\, d\sigma_y\\
&+\int_{\partial \omega^i}S_2( t- s)M^i[\epsilon,\epsilon\delta(\epsilon),\epsilon\delta(\epsilon)\log \epsilon, \eta(\epsilon),\frac{\epsilon}{\rho(\epsilon)}](s)\, d\sigma_s\\
&+\frac{\log \epsilon}{2\pi}\int_{\partial \omega^i}M^i[\epsilon,\epsilon\delta(\epsilon),\epsilon\delta(\epsilon)\log \epsilon, \eta(\epsilon),\frac{\epsilon}{\rho(\epsilon)}](s)\, d\sigma_s\\
&+\frac{\Xi[\epsilon,\epsilon\delta(\epsilon),\epsilon\delta(\epsilon)\log \epsilon, \eta(\epsilon),\frac{\epsilon}{\rho(\epsilon)}]}{\delta(\epsilon)\epsilon}\qquad \forall t \in \overline{\Omega_m}\, 
\end{split}
\]
(cf.~Definition \ref{def:udelta2}). Hence, we set
\[
\begin{split}
U_m[\epsilon,\gamma_1,\gamma_2, \gamma_3,\gamma_4](t)\equiv & \int_{\partial \Omega^o}S_2(\epsilon t-y)M^o[\epsilon,\gamma_1,\gamma_2, \gamma_3,\gamma_4](y)\, d\sigma_y\\
&+ \int_{\partial \omega^i}S_2(t-s)M^i[\epsilon,\gamma_1,\gamma_2, \gamma_3,\gamma_4](s)\, d\sigma_s\qquad \forall t \in \overline{\Omega_m}\, ,
\end{split}
\]
for all $(\epsilon,\gamma_1,\gamma_2, \gamma_3,\gamma_4) \in ]-\epsilon_m,\epsilon_m[\times \mathcal{U}$. By arguing as in the proof of Theorem \ref{thm:rep2}, we verify that $U_m$ and the map $Z_m$ of the statement are real analytic from $]-\epsilon_m,\epsilon_m[\times \mathcal{U}$ to  $C^{1,\alpha}(\overline{ \Omega_m})$ and to $\mathbb{R}$, respectively, and that equality \eqref{eq:rep2:a} holds. By Proposition \ref{prop:Lmbd2}, we also deduce that $Z_m[0,0,l_0,\eta_0,r_0]=\int_{\partial \Omega^o}g^o\, d\sigma$, that  $U_m[0,0,l_0,\eta_0,r_0]=\tilde{u}_{m|\overline{\Omega_m}}$. Also by standard properties of the single layer potential (cf.~\cite[\S 4.4]{DaLaMu21}), we deduce that $\tilde{u}_m$ is a solution of the Neumann problem \eqref{eq:rep2micro:1}. 
\end{proof}

\begin{remark}
We note that if $\int_{\partial \omega^i}\tilde{\mu}^i\, d\sigma\neq 0$ ({\it i.e.}, if $\int_{\partial \Omega^o}g^o\, d\sigma\neq 0$), then the function $\tilde{u}_{m}$ of Theorem \ref{thm:rep2micro} is not harmonic at infinity (cf.~\cite[Definition 3.21 and Theorem 4.23]{DaLaMu21}).
\end{remark}

Finally, we study the behavior of the energy integral $\int_{\Omega(\epsilon)}|\nabla u(\epsilon,x)|^2\, dx$ as the parameter $\epsilon$ approaches $0$.

\begin{theorem}\label{thm:enrep2}
Let the assumptions of Proposition \ref{prop:Lmbd2} hold. Let $\tilde{u}_{M}$ and $\tilde{u}_{m}$ be as in Theorem \ref{thm:rep2} and Theorem \ref{thm:rep2micro}, respectively. Then there exist $\epsilon_e \in ]0,\epsilon_2[$ and  two real analytic maps $E_1$ and $E_2$ from $]-\epsilon_e,\epsilon_e[\times \mathcal{U}$ to $\mathbb{R}$ such that
\begin{equation}\label{eq:rep2:3}
\begin{split}
\int_{\Omega(\epsilon)}|\nabla u(\epsilon,x)|^2\, dx &= E_1[\epsilon,\epsilon\delta(\epsilon),\epsilon\delta(\epsilon)\log \epsilon, \eta(\epsilon),\frac{\epsilon}{\rho(\epsilon)}]+ (\log\epsilon )E_2[\epsilon,\epsilon\delta(\epsilon),\epsilon\delta(\epsilon)\log \epsilon, \eta(\epsilon),\frac{\epsilon}{\rho(\epsilon)}]\,,
\end{split}
\end{equation}
for all $\epsilon \in ]0,\epsilon_e[$. Moreover,
\begin{equation}\label{eq:rep2:4}
\begin{split}
E_1[0,0,l_0,\eta_0,r_0]&= \int_{\partial \Omega^o} \Big(\tilde{u}_M(x)+S_2(x)\int_{\partial \Omega^o}g^o\,d\sigma\Big) \nu_{\Omega^o}(x)\cdot  \nabla \Big(\tilde{u}_M(x)+S_2(x)\int_{\partial \Omega^o}g^o\,d\sigma\Big)\, d\sigma_x\\
&-\int_{\partial \omega^i} \tilde{u}_m(t) \nu_{\omega^i}(t)\cdot  \nabla \tilde{u}_m(t)\, d\sigma_t
\end{split}
\end{equation}
and
\begin{equation}\label{eq:rep2:5}
E_2[0,0,l_0,\eta_0,r_0]=-\frac{1}{2\pi}\Big(\int_{\partial \Omega^o}g^o\,d\sigma\Big)^2\, . 
\end{equation}
\end{theorem}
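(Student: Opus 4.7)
The plan is to apply Green's first identity to $u(\epsilon,\cdot)$ on $\Omega(\epsilon)$, substitute the representations of Theorems \ref{thm:rep2} and \ref{thm:rep2micro} on the two boundary components, and identify $E_1$ and $E_2$ from the resulting decomposition. To have the two representations apply up to $\partial \Omega^o$ and $\partial \omega^i$ respectively, I would take $\Omega_M$ to be a sufficiently thin open neighborhood of $\partial \Omega^o$ in $\Omega^o$ (so that $0 \notin \overline{\Omega_M}$) and $\Omega_m$ of the form $\mathbb{B}_2(0,R) \setminus \overline{\omega^i}$ for $R$ large enough, so that $\partial \Omega^o \subseteq \overline{\Omega_M}$ and $\partial \omega^i \subseteq \overline{\Omega_m}$. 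Since $u(\epsilon,\cdot)$ is harmonic and satisfies the Neumann condition on $\partial \Omega^o$, integration by parts gives
\begin{equation*}
\int_{\Omega(\epsilon)}|\nabla u(\epsilon,x)|^2\, dx = \int_{\partial \Omega^o} u(\epsilon,x) g^o(x)\, d\sigma_x - \int_{\epsilon \partial \omega^i} u(\epsilon,x)\, \frac{\partial u}{\partial \nu_{\epsilon \omega^i}}(\epsilon,x)\, d\sigma_x,
\end{equation*}
and the substitution $x=\epsilon t$ converts the inner integral into $\int_{\partial \omega^i} u(\epsilon,\epsilon t)\, \partial_{\nu_{\omega^i}}[u(\epsilon,\epsilon t)]\, d\sigma_t$ (the factors of $\epsilon$ from the area element and from the rescaled gradient cancel).

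Next, I would insert the representation $u(\epsilon,x) = U_M + \Theta(\epsilon)$ on $\overline{\Omega_M}$ from Theorem \ref{thm:rep2} and $u(\epsilon,\epsilon t) = U_m + (\log \epsilon / 2\pi) Z_m + \Theta(\epsilon)$ on $\overline{\Omega_m}$ from Theorem \ref{thm:rep2micro}, where $\Theta(\epsilon) \equiv \Xi[\epsilon,\epsilon\delta(\epsilon),\epsilon\delta(\epsilon)\log\epsilon,\eta(\epsilon),\epsilon/\rho(\epsilon)]/(\delta(\epsilon)\epsilon)$ and the maps $U_M$, $U_m$, $Z_m$ are evaluated at the same argument. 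Since $Z_m$ and $\Theta(\epsilon)$ are constants in $t$, the tangential derivative of the inner representation reduces to $\partial_{\nu_{\omega^i}} U_m$, and expanding yields
\begin{equation*}
\begin{split}
\int_{\Omega(\epsilon)}|\nabla u|^2\, dx = &\int_{\partial \Omega^o} U_M g^o\, d\sigma + \Theta(\epsilon) \int_{\partial \Omega^o} g^o\, d\sigma \\
&- \int_{\partial \omega^i} U_m\, \partial_{\nu_{\omega^i}} U_m\, d\sigma - \Big(\tfrac{\log \epsilon}{2\pi} Z_m + \Theta(\epsilon)\Big) \int_{\partial \omega^i} \partial_{\nu_{\omega^i}} U_m\, d\sigma.
\end{split}
\end{equation*}
The crucial step is the cancellation of the potentially divergent factor $\Theta(\epsilon)$: applying the divergence theorem to $u(\epsilon,\cdot)$ on $\Omega(\epsilon)$ and rescaling gives $\int_{\partial \omega^i} \partial_{\nu_{\omega^i}} U_m\, d\sigma = \int_{\partial \Omega^o} g^o\, d\sigma$, so the $\Theta(\epsilon)$ contributions from the two boundaries cancel exactly. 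Setting
\begin{equation*}
E_1 \equiv \int_{\partial \Omega^o} U_M g^o\, d\sigma - \int_{\partial \omega^i} U_m\, \partial_{\nu_{\omega^i}} U_m\, d\sigma, \qquad E_2 \equiv -\frac{1}{2\pi}\, Z_m \int_{\partial \Omega^o} g^o\, d\sigma,
\end{equation*}
real analyticity in $(\epsilon,\gamma_1,\gamma_2,\gamma_3,\gamma_4)$ follows from Theorems \ref{thm:rep2} and \ref{thm:rep2micro} combined with continuity of the trace operator, and \eqref{eq:rep2:3} holds.

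Finally, I would evaluate at $\epsilon=0$: identity \eqref{eq:rep2:5} is immediate from $Z_m[0,0,l_0,\eta_0,r_0] = \int_{\partial \Omega^o} g^o\, d\sigma$. For \eqref{eq:rep2:4}, I use $U_M[0,\ldots] = \tilde{u}_M + S_2 \int_{\partial \Omega^o} g^o\, d\sigma$ and $U_m[0,\ldots] = \tilde{u}_m$; by the Neumann problem \eqref{eq:rep2:1} a short calculation gives $\partial_{\nu_{\Omega^o}}\bigl(\tilde{u}_M + S_2 \int_{\partial \Omega^o} g^o\, d\sigma\bigr) = g^o$, which recasts $\int_{\partial \Omega^o} U_M g^o\, d\sigma$ in the form appearing in \eqref{eq:rep2:4}, while the second term of $E_1[0,\ldots]$ matches the second term of \eqref{eq:rep2:4} verbatim. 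I expect the main technical point to be the flux identity $\int_{\partial \omega^i} \partial_{\nu_{\omega^i}} U_m\, d\sigma = \int_{\partial \Omega^o} g^o\, d\sigma$, without which the $\Theta(\epsilon)$ factor would not cancel; beyond this, the argument reduces to standard manipulations with single layer potentials.
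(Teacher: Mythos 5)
Your proposal is correct, and its skeleton coincides with the paper's: apply the divergence theorem on $\Omega(\epsilon)$, choose $\Omega_M$ and $\Omega_m$ so that their closures contain $\partial\Omega^o$ and $\partial\omega^i$ respectively (the paper takes $\Omega_M=\Omega^o\setminus\overline{\mathbb{B}_2(0,r_M)}$ and $\Omega_m=\mathbb{B}_2(0,r_m)\setminus\overline{\omega^i}$, which plays the same role as your collar neighborhood and large ball), insert the representations of Theorems \ref{thm:rep2} and \ref{thm:rep2micro}, and read off $E_1$, $E_2$. The one genuine difference is how the potentially divergent constant $\Theta(\epsilon)=\Xi[\cdots]/(\delta(\epsilon)\epsilon)$ is disposed of. The paper subtracts the full constant $c_\epsilon\equiv\frac{\log\epsilon}{2\pi}Z_m[\cdots]+\Theta(\epsilon)$ \emph{before} integrating by parts, exploiting $\nabla\big(u(\epsilon,\cdot)-c_\epsilon\big)=\nabla u(\epsilon,\cdot)$; then $u-c_\epsilon$ equals $U_M-\frac{\log\epsilon}{2\pi}Z_m$ on $\partial\Omega^o$ and exactly $U_m$ on the rescaled inner boundary, so no divergent term ever appears and the $\log\epsilon$ factor emerges from the outer term $-\frac{\log\epsilon}{2\pi}Z_m\int_{\partial\Omega^o}\nu_{\Omega^o}\cdot\nabla U_M\,d\sigma$. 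You instead integrate by parts on $u$ itself, which exposes the $\Theta(\epsilon)$ contributions on both boundary components, and you cancel them with the flux-balance identity $\int_{\partial\omega^i}\partial_{\nu_{\omega^i}}U_m\,d\sigma=\int_{\partial\Omega^o}g^o\,d\sigma$ (valid along the curve $\epsilon\mapsto(\epsilon,\epsilon\delta(\epsilon),\dots)$ by harmonicity of $u(\epsilon,\cdot)$); consequently your $E_2$ carries the inner flux where the paper's carries the outer one, but the two agree along that curve and at $(0,0,l_0,\eta_0,r_0)$, which is all the statement requires. Your route needs this extra (true) identity that the paper's subtraction trick avoids, but in exchange it makes transparent why the blow-up constant cannot contribute to the energy; the verification of \eqref{eq:rep2:4}--\eqref{eq:rep2:5} via $U_M[0,0,l_0,\eta_0,r_0]=\tilde u_M+S_2\int_{\partial\Omega^o}g^o\,d\sigma$, $U_m[0,0,l_0,\eta_0,r_0]=\tilde u_m$, $Z_m[0,0,l_0,\eta_0,r_0]=\int_{\partial\Omega^o}g^o\,d\sigma$ and $\partial_{\nu_{\Omega^o}}\big(\tilde u_M+S_2\int_{\partial\Omega^o}g^o\,d\sigma\big)=g^o$ is the same in both arguments. (One slip of terminology: where you write ``tangential derivative'' of the inner representation you mean the normal derivative; the mathematics is unaffected.)
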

\begin{proof}
We set
\[
c_\epsilon\equiv \frac{\log \epsilon}{2\pi}Z_m[\epsilon,\epsilon\delta(\epsilon),\epsilon\delta(\epsilon)\log \epsilon,\eta(\epsilon),\frac{\epsilon}{\rho(\epsilon)}] +\frac{\Xi[\epsilon,\epsilon\delta(\epsilon),\epsilon\delta(\epsilon)\log \epsilon,\eta(\epsilon),\frac{\epsilon}{\rho(\epsilon)}]}{\delta(\epsilon)\epsilon}\qquad \forall \epsilon \in ]0,\epsilon_2[\, .
\]
The Divergence Theorem implies that
\[
\begin{split}
&\int_{\Omega(\epsilon)}|\nabla u(\epsilon,x)|^2\, dx=\int_{\Omega(\epsilon)}|\nabla\big( u(\epsilon,x)-c_\epsilon\big)|^2\, dx \\
&=\int_{\partial \Omega^o}\big(u(\epsilon,x)-c_\epsilon\big)\frac{\partial}{\partial \nu_{\Omega^o}}\big(u(\epsilon,x)-c_\epsilon\big)\, d\sigma_x-\int_{\partial \epsilon \omega^i}\big(u(\epsilon,x)-c_\epsilon\big)\frac{\partial}{\partial \nu_{\epsilon \omega^i}}\big(u(\epsilon,x)-c_\epsilon\big)\, d\sigma_x\\
&=\int_{\partial \Omega^o}\big(u(\epsilon,x)-c_\epsilon\big)\frac{\partial}{\partial \nu_{\Omega^o}}\big(u(\epsilon,x)-c_\epsilon\big)\, d\sigma_x- \int_{\partial  \omega^i}\big(u(\epsilon,\epsilon t)-c_\epsilon\big)\nu_{\omega^i}(t)\cdot  \nabla_t \big(u(\epsilon,\epsilon t)-c_\epsilon\big)\, d\sigma_t\, ,\\
\end{split}
\]
for all $\epsilon \in ]0,\epsilon_2[$. Then we take $U_M$ and $\epsilon_M$ as in Theorem \ref{thm:rep2}, with
\[
\Omega_M \equiv \Omega^o \setminus \overline{\mathbb{B}_2(0,r_M)}\ ,
\]
for some $r_M>0$ such that  $\overline{\mathbb{B}_2(0,r_M)} \subseteq \Omega^o$. We verify that if $\epsilon \in ]0,\epsilon_M[$
\[
\begin{split}
&\int_{\partial \Omega^o}\big(u(\epsilon,x)-c_\epsilon\big)\frac{\partial}{\partial \nu_{\Omega^o}}\big(u(\epsilon,x)-c_\epsilon\big)\, d\sigma_x\\
&=\int_{\partial \Omega^o} U_M[\epsilon,\epsilon\delta(\epsilon),\epsilon\delta(\epsilon)\log \epsilon, \eta(\epsilon),\frac{\epsilon}{\rho(\epsilon)}](x) \nu_{\Omega^o}(x)\cdot  \nabla U_M[\epsilon,\epsilon\delta(\epsilon),\epsilon\delta(\epsilon)\log \epsilon, \eta(\epsilon),\frac{\epsilon}{\rho(\epsilon)}](x)\, d\sigma_x\, \\
&-\frac{\log \epsilon}{2\pi}Z_m[\epsilon,\epsilon\delta(\epsilon),\epsilon\delta(\epsilon)\log \epsilon,\eta(\epsilon),\frac{\epsilon}{\rho(\epsilon)}]\int_{\partial \Omega^o}  \nu_{\Omega^o}(x)\cdot  \nabla U_M[\epsilon,\epsilon\delta(\epsilon),\epsilon\delta(\epsilon)\log \epsilon, \eta(\epsilon),\frac{\epsilon}{\rho(\epsilon)}](x)\, d\sigma_x
\end{split}
\]
Similarly, if $U_m$ and $\epsilon_m$ are as in Theorem \ref{thm:rep2micro}, with
\[
\Omega_m \equiv {\mathbb{B}_2(0,r_m)}\setminus \overline{\omega^i}\ ,
\]
for some $r_m>0$ such that  ${\mathbb{B}_2(0,r_m)} \supseteq \overline{\omega^i}$, then if $\epsilon \in ]0,\epsilon_m[$
\[
\begin{split}
& \int_{\partial  \omega^i}\big(u(\epsilon,\epsilon t)-c_\epsilon\big)\nu_{\omega^i}(t)\cdot  \nabla_t \big(u(\epsilon,\epsilon t)-c_\epsilon\big)\, d\sigma_t\\
&=\int_{\partial \omega^i} U_m[\epsilon,\epsilon\delta(\epsilon),\epsilon\delta(\epsilon)\log \epsilon, \eta(\epsilon),\frac{\epsilon}{\rho(\epsilon)}](t) \nu_{\omega^i}(t)\cdot  \nabla U_m[\epsilon,\epsilon\delta(\epsilon),\epsilon\delta(\epsilon)\log \epsilon, \eta(\epsilon),\frac{\epsilon}{\rho(\epsilon)}](t)\, d\sigma_t\, .
\end{split}
\]
Therefore, we set  $\epsilon_e \equiv \min \{\epsilon_M,\epsilon_m\}$ and
\[
\begin{split}
E_1[\epsilon,\gamma_1,\gamma_2,\gamma_3, \gamma_4]\equiv &\int_{\partial \Omega^o} U_M[\epsilon,\gamma_1,\gamma_2,\gamma_3, \gamma_4](x) \nu_{\Omega^o}(x)\cdot  \nabla U_M[\epsilon,\gamma_1,\gamma_2,\gamma_3, \gamma_4](x)\, d\sigma_x\\
&-\int_{\partial \omega^i} U_m[\epsilon,\gamma_1,\gamma_2,\gamma_3, \gamma_4](t) \nu_{\omega^i}(t)\cdot  \nabla U_m[\epsilon,\gamma_1,\gamma_2,\gamma_3, \gamma_4](t)\, d\sigma_t
\end{split}
\]
and
\[
E_2[\epsilon,\gamma_1,\gamma_2,\gamma_3, \gamma_4] \equiv -\frac{1}{2\pi}Z_m[\epsilon,\gamma_1,\gamma_2,\gamma_3, \gamma_4]\int_{\partial \Omega^o}  \nu_{\Omega^o}(x)\cdot  \nabla U_M[\epsilon,\gamma_1,\gamma_2,\gamma_3, \gamma_4](x)\, d\sigma_x
\]
for all $(\epsilon,\gamma_1,\gamma_2,\gamma_3, \gamma_4)\in ]-\epsilon_e,\epsilon_e[\times \mathcal{U}$. We verify that the maps $E_1$ and $E_2$ are real analytic from $]-\epsilon_e,\epsilon_e[\times \mathcal{U}$ to $\mathbb{R}$ and that equality \eqref{eq:rep2:3} holds.  Moreover, we also have
\[
\begin{split}
E_1[0,0,l_0,\eta_0,r_0]&= \int_{\partial \Omega^o} \Big(\tilde{u}_M(x)+S_2(x)\int_{\partial \Omega^o}g^o\,d\sigma\Big) \nu_{\Omega^o}(x)\cdot  \nabla \Big(\tilde{u}_M(x)+S_2(x)\int_{\partial \Omega^o}g^o\,d\sigma\Big)\, d\sigma_x\\
&-\int_{\partial \omega^i} \tilde{u}_m(t) \nu_{\omega^i}(t)\cdot  \nabla \tilde{u}_m(t)\, d\sigma_t
\end{split}
\]
and
\[
\begin{split}
E_2[0,0,l_0,\eta_0,r_0]&= -\frac{1}{2\pi}\int_{\partial \Omega^o}g^o\,d\sigma \int_{\partial \Omega^o} \nu_{\Omega^o}(x)\cdot  \nabla \Big(\tilde{u}_M(x)+S_2(x)\int_{\partial \Omega^o}g^o\, d\sigma \Big)\, d\sigma_x\\
&= -\frac{1}{2\pi}\Big(\int_{\partial \Omega^o}g^o\,d\sigma\Big)^2\, ,
\end{split}
\]
and accordingly equalities \eqref{eq:rep2:4} and \eqref{eq:rep2:5} hold.
\end{proof}

\section{Remarks on the linear case}\label{case}

In this section, we make further considerations on the asymptotic behavior of the solution in the linear case as the parameter $\epsilon$ tends to $0$. Clearly, we can apply the results of Section \ref{inteqfor} to the linear case. In particular, if we have
\[
F_\epsilon(\tau)=\tau \qquad \forall (\tau,\epsilon)\in \mathbb{R}\times ]0,\epsilon_0[\, ,
\]
problem \eqref{bvpdelta} reduces to the following linear problem
\begin{equation}
\label{bvplineps}
\left\{
\begin{array}{ll}
\Delta u(x)=0 & \forall x \in \Omega(\epsilon)\,,\\
\frac{\partial}{\partial \nu_{\Omega^o}}u(x)=g^o(x) & \forall x \in \partial \Omega^o\, ,\\
\frac{\partial}{\partial \nu_{\epsilon\omega^i}}u(x)=\delta(\epsilon) u(x)+\frac{g^i(x/\epsilon)}{\rho(\epsilon)} & \forall x \in \epsilon \partial \omega^i\, .
\end{array}
\right.
\end{equation}
We also know that for each $\epsilon \in ]0,\epsilon_0[$, problem \eqref{bvplineps} has a unique solution in $C^{1,\alpha}(\overline{ \Omega(\epsilon)})$, which we denote by $u[\epsilon]$.
Clearly,
\[
\epsilon \delta(\epsilon) F_\epsilon \Big(\frac{1}{\epsilon \delta(\epsilon)}\tau\Big)=\tau \qquad  \forall (\tau,\epsilon)\in \mathbb{R}\times ]0,\epsilon_0[\, ,
\]
and thus we can take for example
\[
\eta(\epsilon)=0 \qquad \forall \epsilon \in ]0,\epsilon_0[\, ,
\]
and
\[
\tilde{F}(\tau,\eta)=\tau \qquad \forall (\tau,\eta)\in \mathbb{R}^2\, .
\]
In particular,
\[
\eta_0=0\, ,
\]
and
\[
\partial_\tau\tilde{F}(\tau,\eta)=1 \qquad \forall (\tau,\eta)\in \mathbb{R}^2\, .
\]
All the assumptions in Sections \ref{inteqfor} and \ref{rep} are satisfied. In particular, the solutions of the corresponding limiting systems exist and are unique (see assumption  \eqref{eq:exsol:2}). In the linear case, equations \eqref{eq:limsys:1a2}-\eqref{eq:limsys:1b2} become
\begin{align}
&-\frac{1}{2}\mu^o(x)+\int_{\partial \Omega^o}\nu_{\Omega^o}(x)\cdot \nabla S_2(x-y)\mu^o(y)\, d\sigma_y\nonumber\\
&\qquad+\nu_{\Omega^o}(x)\cdot \nabla S_2(x)\int_{\partial \omega^i}\mu^i(s)\, d\sigma_s=g^o(x) \qquad \forall x \in \partial \Omega^o\, ,\label{eq:limsyslin:1a2}\\
&\frac{1}{2}\mu^i(t)+\int_{\partial \omega^i}\nu_{\omega^i}(t)\cdot \nabla S_2(t-s)\mu^i(s)\, d\sigma_s \nonumber\\
&\qquad=\frac{l_0}{2\pi} \int_{\partial \omega^i}\mu^i\, d\sigma+\xi+g^i(t)r_0 \qquad \forall t \in \partial \omega^i\, .\label{eq:limsyslin:1b2}
\end{align}
By arguing as in the proof of Proposition \ref{prop:Lmbd2}, one can prove that the system  \eqref{eq:limsyslin:1a2}-\eqref{eq:limsyslin:1b2} in the unknown $(\mu^o,\mu^i,\xi)$ admits a unique solution $(\tilde{\mu}^o,\tilde{\mu}^i,\tilde{\xi})$ in $C^{0,\alpha}(\partial \Omega^o)_0\times C^{0,\alpha}(\partial \omega^i) \times \mathbb{R}$. In particular, by integrating \eqref{eq:limsyslin:1a2}, we recall that we obtain
\[
\int_{\partial \omega^i}\tilde{\mu}^i(s)\, d\sigma_s=\int_{\partial \Omega^o}g^o(x)\, d\sigma_x\, .
\]
By integrating \eqref{eq:limsyslin:1b2}, we deduce that
\[
\int_{\partial \Omega^o}g^o(x)\, d\sigma_x=|\partial \omega^i|_{1} \frac{l_0}{2\pi}\int_{\partial \Omega^o}g^o(x)\, d\sigma_x +|\partial \omega^i|_{1}\tilde{\xi}+r_0\int_{\partial \omega^i}g^i(t)\,d\sigma_t\, ,
\]
which implies
\[
\tilde{\xi}=\frac{1}{|\partial \omega^i|_{1}}\Bigg(\Big(1-|\partial \omega^i|_{1} \frac{l_0}{2\pi}\Big)\int_{\partial \Omega^o}g^o(x)\, d\sigma_x-r_0\int_{\partial \omega^i}g^i(t)\,d\sigma_t\Bigg)\, .
\]
We note that in case $\Omega^o=\omega^i=\mathbb{B}_2(0,1)$ and
\[
g^o(x)=a \qquad \forall x \in \partial \mathbb{B}_2(0,1)\, ,\qquad g^i(t)=b \qquad \forall t \in \partial \mathbb{B}_2(0,1)\, ,
\]
we obtain
\[
\begin{split}
\tilde{\xi}&=\Bigg(\Big(1-2\pi \frac{l_0}{2\pi}\Big)a-b r_0 \Bigg)\\
&=\Bigg(a- a l_0-b r_0 \Bigg)\, .
\end{split}
\]
Therefore, we recover also the results of Section \ref{model}.

\section*{Acknowledgement}
The authors acknowledge the support from EU through the H2020-MSCA-RISE-2020 project EffectFact,
Grant agreement ID: 101008140. The authors thank Dr.~Luigi Provenzano for valuable discussions on Steklov eigenvalues in relation to the toy problem of Section \ref{model}. P.M.~also acknowledges the support of  the SPIN Project  ``DOMain perturbation problems and INteractions Of scales - DOMINO''  of the Ca' Foscari University of Venice. Part of the work was done while P.M.~was visiting M.D.~at Rockfield Software Limited. P.M.~wishes to thank M.D.~and Rockfield Software Limited for the kind hospitality. P.M.~is a  member of the Gruppo Nazionale per l'Analisi Matematica, la Probabilit\`a e le loro Applicazioni (GNAMPA) of the Istituto Nazionale di Alta Matematica (INdAM). G.M.~acknowledges also Ser Cymru Future Generation Industrial Fellowship number AU224 -- 80761. M.D. also acknowledges  the Royal Academy of Engineering for the Industrial Fellowship.


\end{document}